\newtheorem{theorem}{Theorem}[section]
\newtheorem{lemma}[theorem]{Lemma}
\newtheorem{definition}[theorem]{Definition}
\newtheorem{proposition}[theorem]{Proposition}
\newtheorem{remark}[theorem]{Remark}
\newtheorem{example}[theorem]{Example}
\newtheorem*{theorem*}{\it Theorem}
\numberwithin{equation}{section}
\def\qed{\hfill$\Box$ \vskip.3cm}
\def\1{\raisebox{2pt}{\rm{$\chi$}}}
\def\Xint#1{\mathchoice
	{\XXint\displaystyle\textstyle{#1}}%
	{\XXint\textstyle\scriptstyle{#1}}%
	{\XXint\scriptstyle\scriptscriptstyle{#1}}%
	{\XXint\scriptscriptstyle\scriptscriptstyle{#1}}%
	\!\int}
\def\XXint#1#2#3{{\setbox0=\hbox{$#1{#2#3}{\int}$}
		\vcenter{\hbox{$#2#3$}}\kern-.5\wd0}}
\def\dashint{\Xint-}
\begin{document}
	
\title[Trace space of anisotropic least gradient functions]{\bf The trace space of anisotropic least gradient functions depends on the anisotropy}
	
\author[W. G\'{o}rny]{Wojciech G\'{o}rny}
	
\address{ W. G\'{o}rny: Faculty of Mathematics, Universit\"at Wien, Oskar-Morgerstern-Platz 1, 1090 Vienna, Austria; Faculty of Mathematics, Informatics and Mechanics, University of Warsaw, Banacha 2, 02-097 Warsaw, Poland
\hfill\break\indent
{\tt  wojciech.gorny@univie.ac.at }
}

%
%
	
\keywords{ Least gradient problem, Anisotropy, Trace space. \\
\indent 2020 {\it Mathematics Subject Classification:} 35J67, 35J25, 35J75, 49J45.}
	
\setcounter{tocdepth}{1}

\date{\today}
	
\begin{abstract}
We study the set of possible traces of anisotropic least gradient functions. We show that even on the unit disk it changes with the anisotropic norm: for two sufficiently regular strictly convex norms the trace spaces coincide if and only if the norms coincide. The example of a function in exactly one of the trace spaces is given by a characteristic function of a suitably chosen Cantor set.
\end{abstract}
	
\maketitle


\section{Introduction}

The least gradient problem is the following minimisation problem
\begin{equation}\label{eq:lgpisotropic}\tag{LGP}
\min \bigg\{ \int_{\Omega} |Du|:  \, u \in BV(\Omega), \, u|_{\partial\Omega} = f \bigg\},
\end{equation}
where $f \in L^1(\partial\Omega)$. It was first considered in this form by Sternberg, Williams and Ziemer in \cite{SWZ}, but its roots go back to the works of Miranda \cite{Mir0,Mir} and Bombieri, de Giorgi and Giusti \cite{BGG} on area-minimising sets. It can be also expressed as the Dirichlet problem for the $1$-Laplace operator, see \cite{MazRoSe}. This problem and its anisotropic versions appear in relation to free material design, conductivity imaging, and optimal transport (see \cite{DS,GRS2017NA,JMN}).

Since problem \eqref{eq:lgpisotropic} consists of minimisation of a linear growth functional, the natural energy space is $BV(\Omega)$ and the trace operator $T: BV(\Omega) \rightarrow L^1(\partial\Omega)$ is not continuous with respect to the weak* convergence, so (unlike the $p$-Laplace equation) existence of solutions does not immediately follow from the use of the direct method of calculus of variations. The authors of \cite{BGG} have shown that for a solution of \eqref{eq:lgpisotropic} the superlevel sets are area-minimising, so it is natural to require that $\Omega \subset \mathbb{R}^N$ is strictly convex (or more generally, that $\partial\Omega$ has nonnegative mean curvature and is not locally area-minimising). In this case, it was shown in \cite{SWZ} that solutions exist for continuous boundary data. Later, this result was extended in \cite{Gor2021IUMJ,Mor} to boundary data which are continuous $\mathcal{H}^{N-1}$-a.e. on $\partial\Omega$. On the other hand, it was shown in \cite{ST} that even when $\Omega$ is a disk, there exists boundary data $f \in L^\infty(\partial\Omega)$ for which there is no solution to problem \eqref{eq:lgpisotropic}: it is given by a certain set on $\partial\Omega$ which is homeomorphic to the Cantor set. Similar examples of this type were also considered in \cite{DosS}, where the author proves that the construction from \cite{ST} can be made on any set with $C^2$ boundary; in \cite{Gor2018CVPDE}, where it was studied in relation to stability results for solutions to \eqref{eq:lgpisotropic}; and in a recent preprint \cite{Kli}, where the author shows that the set of functions on $\partial\Omega$ for which exist solutions to \eqref{eq:lgpisotropic} is not a vector space.

Our main focus in this paper is on existence of solutions to the anisotropic least gradient problem, i.e.
\begin{equation}\label{eq:problem}\tag{aLGP}
\min \bigg\{\int_{\Omega} |Du|_\phi : \, u \in BV(\Omega), \, u|_{\partial\Omega} = f \bigg\},
\end{equation}
where $f \in L^1(\partial\Omega)$. In the study of the anisotropic least gradient problem, the most important special cases are $\phi(x,\xi) = a(x) |\xi|$ (called the weighted least gradient problem) and the case when $\phi$ is a strictly convex norm (i.e. its unit ball is strictly convex). A particular class of metric integrands that we will use throughout the paper are the $l_p$ norms, i.e.
$$ \phi(x,\xi) = l_p(\xi) := \sqrt[p]{|\xi_1|^p + |\xi_2|^p}.$$
Note that the $l_2$ norm is the standard Euclidean norm and $l_1$ is the Manhattan metric. 

The standard assumption used to obtain existence of solutions to problem \eqref{eq:problem} was introduced in \cite{JMN} and is called the {\it barrier condition}. It is a local property at every point $x_0 \in \partial\Omega$, which states that the boundary is not area-minimising with respect to internal variations. Under this assumption, existence of solutions for continuous boundary data was proved in \cite{JMN}. Later, the result was extended to boundary data which are continuous $\mathcal{H}^{N-1}$-almost everywhere, see \cite{Gor2021IUMJ,Mor}. When $\phi$ is a strictly convex norm, the barrier condition is weaker than strict convexity of $\Omega$ and stronger than convexity of $\Omega$, see \cite{Gor2021IUMJ}.

We focus on the case when $\Omega$ is a two-dimensional disk and $\phi$ is a strictly convex norm. The discussion in the previous paragraph implies that there exist solutions to \eqref{eq:problem} for continuous boundary data. Our goal is to show that in spite of this existence result, which is uniform with respect to $\phi$, the set of functions on $\partial\Omega$ for which exist solutions to \eqref{eq:problem} changes with the anisotropic norm. In other words, we study the trace space of anisotropic least gradient functions. We focus on the two-dimensional case and for simplicity we work on the unit ball. The main result of the paper is the following.

\begin{theorem}\label{thm:tracespace}
Suppose that $\phi_1$ and $\phi_2$ are two strictly convex norms of class $C^2$. Unless $\phi_1 = c \phi_2$ for some $c > 0$, there exists a function $f \in L^\infty(\partial\Omega)$ such that there exists a solution to \eqref{eq:problem} for $\phi_1$, but there is no solution to \eqref{eq:problem} for $\phi_2$.
\end{theorem}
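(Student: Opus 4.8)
The plan is to reduce \eqref{eq:problem} on the disk to a geometric minimisation over families of chords, to isolate the comparison between two chords through four boundary points in which the anisotropy is felt, and to feed a configuration realising this comparison into a self-similar Cantor construction as in \cite{ST}. Concretely, I would first use that on a strictly convex planar domain, for a strictly convex norm $\phi$, the superlevel sets of a solution of \eqref{eq:problem} are $\phi$-area minimising and their relative boundary in $\Omega$ consists of straight chords; hence for $f=\chi_A$ the competitors reduce to sets $E\subseteq\Omega$ whose relative boundary in $\Omega$ is a locally finite, pairwise non-crossing family of chords $[p_i,q_i]$ joining the points of $\partial A\subseteq\partial\Omega$, each locally separating $A$ from $\partial\Omega\setminus A$. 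A direct computation with the anisotropic total variation identifies the $\phi$-energy of such a set with $\sum_i\psi_\phi(q_i-p_i)$, where $\psi_\phi(\xi):=\phi(\xi^\perp)$ ($\xi^\perp$ the rotation of $\xi$ by $\tfrac\pi2$) is again a strictly convex norm of class $C^1$, and $\phi_1=c\phi_2$ iff $\psi_{\phi_1}=c\psi_{\phi_2}$. Thus $f=\chi_A$ lies in the trace space of $\phi$ precisely when the infimum of the total $\psi_\phi$-length over admissible chord systems for $A$ is attained.

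\emph{The anisotropy at four points.} For four points $a,b,c,d$ in cyclic order on $\partial\Omega$ and the datum equal to $1$ on $(a,b),(c,d)$ and to $0$ on $(b,c),(d,a)$, a minimiser is one of the two non-crossing pairings, of costs $\psi(b-a)+\psi(d-c)$ and $\psi(c-b)+\psi(d-a)$. The key comparison lemma is that, unless $\psi_{\phi_1}=c\psi_{\phi_2}$, one can choose such a quadruple with
\[
\psi_{\phi_1}(b-a)+\psi_{\phi_1}(d-c)<\psi_{\phi_1}(c-b)+\psi_{\phi_1}(d-a),\qquad \psi_{\phi_2}(b-a)+\psi_{\phi_2}(d-c)>\psi_{\phi_2}(c-b)+\psi_{\phi_2}(d-a),
\]
so that $\phi_1$ favours the pairing $\{[a,b],[c,d]\}$ and $\phi_2$ the pairing $\{[b,c],[d,a]\}$. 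I would prove this by placing the four points on a short arc around a base point $p_0\in\partial\Omega$ at arclength coordinates $s_0-\alpha,\ s_0,\ s_0+\eta,\ s_0+\eta+\beta$ and expanding the difference of the two costs, which equals $-2\eta\,\psi(\tau_0)+(\beta^2-\alpha^2)\,\nabla\psi(\tau_0)\cdot n_0$ up to higher-order terms, where $\tau_0,n_0$ are the unit tangent and inner normal at $p_0$; thus, to this order, it vanishes for $\eta$ proportional to $\nabla\psi(\tau_0)\cdot n_0/\psi(\tau_0)=\tfrac{d}{d\theta}\log\psi(\tau(\theta))$. Were these critical values to agree at every base point, the periodic functions $\theta\mapsto\tfrac{d}{d\theta}\log\psi_{\phi_i}(\tau(\theta))$ would coincide and force $\psi_{\phi_1}=c\psi_{\phi_2}$; since their difference has zero mean, it is negative at some $\theta_0$, which (after a suitable choice of $\alpha,\beta,\eta$) places $\phi_1$ on the side of the two short chords and realises the strict inequalities above. \emph{This lemma --- where the $C^1$ regularity enters essentially and the bookkeeping of signs and of the surviving quadruple under the various degenerations must be done carefully --- together with the calibration of the Cantor construction below is the main obstacle.}

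\emph{The Cantor construction.} Finally I would build, inside a short arc around the base point $p_0$ of the comparison lemma, a boundary datum $f=\chi_A$ with $A$ homeomorphic to the Cantor set, self-similar so that the local competition of the comparison lemma recurs at every scale; the arc is taken small enough that the tangent direction is essentially $\tau_0$ throughout, so that comparison applies at each scale, and the scaling ratios are chosen so that the chords over all scales have finite total $\psi_\phi$-length and both $A$ and $\partial\Omega\setminus A$ have positive measure. For $\phi_1$: the strict preference for the ``local'' resolution at every scale --- which, the pieces being geometrically small and well separated, does not interact across scales --- shows that the greedy chord system, resolving each gap at the coarsest level at which it appears, is admissible, has finite energy, and is a minimiser, so $f$ belongs to the trace space of $\phi_1$. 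For $\phi_2$: the opposite strict preference at every scale shows that any admissible chord system can be strictly improved by refining a batch of resolutions one level further, so the infimum is approached only along configurations whose superlevel sets collapse the Cantor set and hence lose the boundary trace, while every $BV$ function with trace $f$ has energy strictly above the infimum; therefore \eqref{eq:problem} for $\phi_2$ has no solution. This yields the function $f$ asserted in Theorem~\ref{thm:tracespace}, and the remaining work is to fix the positions and ratios in the construction so that admissibility, the trace identities, and the two ``no interaction across scales'' statements become quantitative.
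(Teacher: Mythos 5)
Your overall architecture is the same as the paper's (reduction to characteristic functions and chord systems, a Cantor-type boundary datum, and a four-point ``trapezoid'' comparison repeated at every scale), but the way you hook the sign of that comparison to existence versus nonexistence is inverted. In your notation, $\{[a,b],[c,d]\}$ are the chords subtending the \emph{retained} arcs (the refined resolution), while $\{[b,c],[d,a]\}$ subtend the gaps and constitute exactly your ``greedy'' system. If a norm strictly prefers $\{[a,b],[c,d]\}$ at every scale, the greedy system is \emph{not} a minimiser at any finite stage (each trapezoid in it can be strictly improved by refining), the unique finite-stage minimisers are the refined sets, these decrease to the empty set, and --- provided $\mathcal{H}^1$ of the Cantor set is positive --- there is no solution in the limit; this is precisely the paper's nonexistence mechanism (Propositions \ref{prop:existenceofEn} and \ref{prop:nonexistence}). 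Preference for (in the paper: exact equality of) the gap pairing is what makes the greedy set a solution with the correct trace (Propositions \ref{prop:nonuniquesolutions} and \ref{prop:existencelimit}). So with your displayed inequalities it is $\phi_1$ that would have no solution and $\phi_2$ that would have one, and the justifications you give are each tied to the wrong sign: given your preference for $\phi_1$, ``the greedy chord system is a minimiser'' is false, and given your preference for $\phi_2$, ``any admissible chord system can be strictly improved by refining'' is false. Moreover, even with the roles swapped, ``is a minimiser'' for the limit datum is not free: one needs the finite-stage analysis, passage to the limit via Miranda's theorem, the cone argument identifying the trace of the limit set, and a proof that the Cantor set keeps positive $\mathcal{H}^1$-measure compatibly with your strict inequalities at every scale.

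The second, more fundamental problem is that the expansion in your comparison lemma is wrong, and with it the proposed way of exploiting $\phi_1\neq c\phi_2$. Writing $g(\theta)=\psi(\tau(\theta))$, the cost of the chord between boundary points at arclengths $s_1<s_2$ is $2\sin\frac{s_2-s_1}{2}\,g\bigl(\frac{s_1+s_2}{2}\bigr)$; expanding all four chords around $s_0$ shows that the first-derivative contributions of $[a,b]$ and $[c,d]$ are cancelled exactly by that of $[a,d]$: the coefficient of $g'(s_0)$ in the difference of the two costs is $-\eta^2$ (negligible), not $\beta^2-\alpha^2$, and the difference is $-2\eta\,g(s_0)+\frac{\alpha\beta(\alpha+\beta)}{8}\bigl(g(s_0)+g''(s_0)\bigr)+\dots$, as one can check against the Euclidean defect $2\sin\frac{\alpha}{2}+2\sin\frac{\beta}{2}-2\sin\frac{\alpha+\beta}{2}$. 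Hence the critical $\eta$ is governed by the curvature quantity $(g+g'')/g$, not by $\frac{d}{d\theta}\log\psi$, so your mechanism for detecting non-proportionality fails as stated, and any corrected version of it requires second-derivative information, whereas the theorem assumes only $C^1$. The paper sidesteps exactly this cancellation: it does not try to separate the norms by a free expansion at one scale, but builds the configuration so that the comparison is exactly critical for $\phi_1$ (i.e.\ $h_{\phi_1}=0$ at every trapezoid, which in addition automatically gives $\mathcal{H}^1(F_\infty)>0$ by Proposition \ref{prop:positivemeasure}), and then compares $h_{\phi_2}$ with $h_{\phi_1}$ on the \emph{same} configuration, using only first derivatives of the norms along $\mathbb{S}^1$ at a direction where, after normalisation, the values agree and the derivatives differ. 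To make your proposal work you would need to replace the comparison lemma by an argument of this type (or strengthen the regularity hypothesis to $C^2$ and redo the lemma with the curvature term), in addition to fixing the existence/nonexistence correspondence above.
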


In other words, the trace spaces of anisotropic least gradient functions for sufficiently regular $\phi_1$ and $\phi_2$ coincide if and only if $\phi_1 = c \phi_2$. This is achieved using a construction of a suitable set on $\partial\Omega$ which is homeomorphic to the Cantor set, similarly to the examples appearing in \cite{DosS,Gor2018CVPDE,Kli,ST}, with parameters carefully chosen so that certain key inequalities are satisfied on every level of the construction. Note that this phenomenon is closely related to the fact that we minimise a functional with linear growth; it does not appear for the anisotropic $p$-Laplace equation. Also, it is not related to the regularity of the boundary data, as the functions which lie in exactly one of the trace spaces will be obtained using variants of the same construction.

The structure of the paper is as follows. In Section \ref{sec:preliminaries}, we recall the required notions on anisotropic BV spaces and the anisotropic least gradient problem. The rest of the paper is devoted to the proof of the main result (Theorem \ref{thm:tracespace}). In Section \ref{sec:restriction}, we argue that we can restrict our attention to the case when both the boundary datum and the solutions are characteristic functions of some sets and present a simple example on a non-stricly convex domain, which serves as a toy model of our reasoning in the proof of the main result. In Section \ref{sec:notation}, we introduce the notation for the main part of the proof of Theorem \ref{thm:tracespace}. The proofs of most results are located in Section \ref{sec:proof}. We construct of a set $F_\infty$ which is homeomorphic to the Cantor set, and give two general conditions \eqref{eq:maininequality} and \eqref{eq:mainequality}, under which we can find structure of solutions to approximate problems at every stage of the construction, from which follows respectively nonexistence or existence of solutions in the limit. We prove Theorem \ref{thm:tracespace} by showing that we can choose $F_\infty$ so that one of the conditions holds for $\phi_1$ and the other for $\phi_2$. We complement the result with a short discussion on nonsmooth and non-strictly convex norms.

\section{Preliminaries}\label{sec:preliminaries}

In this Section, we shortly recall the main definitions and results related to the anisotropic least gradient problem. From now on, we assume that $\Omega \subset \mathbb{R}^2$ is an open bounded set with Lipschitz boundary and $\phi$ denotes a norm on $\mathbb{R}^2$. We will often require that $\phi$ is strictly convex, i.e. its unit ball is strictly convex. We focus on results related to existence of solutions and briefly discuss the assumptions on the domain and regularity of boundary data. To simplify the presentation, we restrict ourselves to the two-dimensional case.

First, we recall the notion of anisotropic BV spaces (see \cite{AB}); a classical reference for the general theory in the isotropic case is \cite{AFP}. We start with the definition of the anisotropic total variation.

\begin{definition}
The $\phi-$total variation of $u \in L^1(\Omega)$ is defined by the formula
\begin{equation}
\int_\Omega |Du|_\phi = \sup \, \bigg\{ \int_\Omega u \, \mathrm{div} (\mathbf{z}) \, dx : \, \phi^0(\mathbf{z}(x)) \leq 1 \, \, \, \text{a.e.},\,\, \mathbf{z} \in C_c^1(\Omega)  \bigg\},
\end{equation}
where $\phi^0: \mathbb{R}^2 \rightarrow [0, \infty)$ given by the formula
\begin{equation}
\phi^0 (\xi^*) = \sup \, \{ \langle \xi^*, \xi \rangle : \, \xi \in \mathbb{R}^2, \, \phi(\xi) \leq 1 \}.
\end{equation}
is the polar function of $\phi$.

We say that $u \in BV_\phi(\Omega)$ if its $\phi-$total variation is finite. Since any norm on $\mathbb{R}^2$ is equivalent to the Euclidean norm, we have
$$\lambda \int_\Omega |Du| \leq \int_\Omega |Du|_\phi \leq \Lambda \int_\Omega |Du|,$$
so $BV_\phi(\Omega) = BV(\Omega)$ as sets. They are equipped with different (but equivalent) norms. Furthermore, given a measurable set $E \subset \Omega$, we define its $\phi-$perimeter as
$$P_\phi(E, \Omega) = \int_{\Omega} |D\chi_E|_\phi.$$
If $P_\phi(E, \Omega) < \infty$, we say that $E$ is a set of finite $\phi-$perimeter in $\Omega$. Again, since any norm is equivalent to the Euclidean norm, sets of finite perimeter coincide with sets of finite $\phi$-perimeter.
\end{definition}

Now, we recall the notion of $\phi$-least gradient functions. Historically, this was how the problem was first introduced (in the isotropic case) in \cite{BGG,Mir0,Mir}.

\begin{definition}
We say that $u \in BV(\Omega)$ is a function of $\phi$-least gradient, if for all $v \in BV(\Omega)$ with compact support (equivalently: with zero trace) we have
$$ \int_\Omega |Du|_\phi \leq \int_\Omega |D(u+v)|_\phi.$$
\end{definition}

The first two results concern local properties of $\phi$-least gradient functions. The first one states that a limit of $\phi$-least gradient functions is itself a $\phi$-least gradient function; while the proof in \cite{Mir} is given in the isotropic case, the proof uses only basic properties of BV functions and a generalisation to the anisotropic case is straightforward.

\begin{theorem}\label{thm:miranda}
Suppose that $u_n \in BV(\Omega)$ is a sequence of $\phi$-least gradient functions which converges in $L^1(\Omega)$ to $u \in BV(\Omega)$. Then, $u$ is a $\phi$-least gradient function.
\end{theorem}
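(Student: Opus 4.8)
The plan is to follow the classical argument of Miranda \cite{Mir}, checking that its only ingredients remain valid for an arbitrary norm $\phi$: the lower semicontinuity of $u \mapsto \int_U |Du|_\phi$ under $L^1(U)$ convergence on open sets $U$ (immediate from the dual definition of the $\phi$-total variation), the countable additivity and locality of the Radon measure $|Dw|_\phi$, and the Leibniz rule for the product of a Lipschitz function with a $BV$ function. Fix $v \in BV(\Omega)$ with compact support; the goal is to show $\int_\Omega |Du|_\phi \le \int_\Omega |D(u+v)|_\phi$. The one conceptual point is that the $\phi$-total variation is merely lower semicontinuous, not continuous, under $L^1$ convergence, so one cannot pass to the limit on both sides of the least gradient inequality written for $u_n$; instead, for each $n$ I construct a competitor tailored to $u_n$ and use only one-sided estimates in the limit.

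Write $d(x) = \dist(x, \partial\Omega)$, fix $0 < a < b < \dist(\mathrm{supp}\, v, \partial\Omega)$, and pick a Lipschitz cutoff $\eta \colon \Omega \to [0,1]$ with $\eta \equiv 1$ on $\{d \ge b\}$, $\eta \equiv 0$ on $\{d \le a\}$, $|\nabla \eta| \le C/(b-a)$; note that $\{\eta > 0\}$ is relatively compact in $\Omega$ and that $v$ vanishes on the tube $T := \{a < d < b\}$ and on $\{d \le b\}$. Set
\[
\widetilde u_n := \eta\,(u+v) + (1-\eta)\,u_n = u_n + \eta\,(u+v-u_n),
\]
which differs from $u_n$ by a $BV$ function with compact support in $\Omega$, hence is admissible. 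The Leibniz rule gives $D\widetilde u_n = \eta\, D(u+v) + (1-\eta)\, Du_n + (u+v-u_n)\,\nabla\eta\,\mathcal{L}^2$, and the subadditivity and homogeneity of $\phi$ together with the bound $\phi \le \Lambda |\cdot|$ yield
\[
\int_\Omega |D\widetilde u_n|_\phi \le \int_\Omega \eta \, d|D(u+v)|_\phi + \int_\Omega (1-\eta)\, d|Du_n|_\phi + \frac{\Lambda C}{b-a}\int_T |u+v-u_n|\,dx .
\]
Since $u_n$ is of $\phi$-least gradient, $\int_\Omega |Du_n|_\phi \le \int_\Omega |D\widetilde u_n|_\phi$; cancelling $\int_\Omega (1-\eta)\, d|Du_n|_\phi$ from both sides and using $\eta \equiv 1$ on $\{d \ge b\}$ and $\eta \le 1$ with $\eta \equiv 0$ on $\{d \le a\}$, I arrive at
\[
|Du_n|_\phi(\{d > b\}) \le |D(u+v)|_\phi(\{d > a\}) + \frac{\Lambda C}{b-a}\int_T |u+v-u_n|\,dx .
\]

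Now I let $n \to \infty$ with $a,b$ fixed. By lower semicontinuity of the $\phi$-total variation on the open set $\{d > b\}$ one has $|Du|_\phi(\{d > b\}) \le \liminf_n |Du_n|_\phi(\{d > b\})$, while on the right-hand side the first term is constant and the error term tends to $\tfrac{\Lambda C}{b-a}\int_T |v|\,dx = 0$ because $v \equiv 0$ on $T$; hence $|Du|_\phi(\{d > b\}) \le |D(u+v)|_\phi(\{d > a\})$. Since $v \equiv 0$ on the open set $\{d < \dist(\mathrm{supp}\,v, \partial\Omega)\} \supseteq \{d \le b\}$, the measures $|Du|_\phi$ and $|D(u+v)|_\phi$ coincide there; adding $|Du|_\phi(\{d \le b\}) = |D(u+v)|_\phi(\{d \le b\})$ and using additivity of the finite measure $|D(u+v)|_\phi$ gives, for every admissible pair $a<b$,
\[
\int_\Omega |Du|_\phi \le \int_\Omega |D(u+v)|_\phi + |D(u+v)|_\phi\big(\{a < d \le b\}\big) .
\]
Finally I choose a level $t^* \in (0, \dist(\mathrm{supp}\,v, \partial\Omega))$ with $|D(u+v)|_\phi(\{d = t^*\}) = 0$ — possible for all but countably many $t^*$, since the level sets are disjoint and the measure is finite — and let $a = t^* - \varepsilon$, $b = t^* + \varepsilon$ with $\varepsilon \to 0^+$; the last term shrinks to $0$, which finishes the proof. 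The main obstacle is exactly this conceptual one: the absence of continuity of the total variation forces the competitor construction and the two-stage passage to the limit (first $n \to \infty$ at a fixed tube, then $\varepsilon \to 0$). The only technical nuisance is a routine truncation of $u$ to justify the Leibniz rule when $u$ is unbounded; everything else goes through unchanged for any norm $\phi$, since nothing in the argument uses a special feature of the Euclidean norm.
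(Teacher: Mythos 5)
Your argument is correct: it is precisely the classical Miranda-type cutoff/gluing argument (competitor $\eta(u+v)+(1-\eta)u_n$, one-sided use of lower semicontinuity, then shrinking the transition strip), which is exactly what the paper invokes without writing out — its ``proof'' of Theorem \ref{thm:miranda} is just the remark that the isotropic proof of \cite{Mir} carries over verbatim, and your write-up supplies those details faithfully. The only cosmetic point is that the final truncation remark is unnecessary, since the Leibniz rule $D(\eta w)=\eta\,Dw+w\nabla\eta\,\mathcal{L}^2$ already holds for a bounded Lipschitz $\eta$ and any $w\in BV(\Omega)$.
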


The second result concerns the superlevel sets of $\phi$-least gradient functions. It is a consequence of the co-area formula, and it was first proved in \cite{BGG} in the isotropic case and in \cite{Maz} in the anisotropic case.

\begin{theorem}\label{thm:bgg}
Suppose that $u \in BV(\Omega)$ is a $\phi$-least gradient function. Then, for all $t \in \mathbb{R}$, the function $\chi_{\{ u \geq t \}}$ is also a function of $\phi-$least gradient.
\end{theorem}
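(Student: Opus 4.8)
The plan is to deduce the statement from the anisotropic coarea formula, which for $u \in BV(\Omega)$ reads
$$\int_\Omega |Du|_\phi = \int_{-\infty}^{+\infty} P_\phi(\{u > s\}, \Omega)\, ds;$$
this is proved in \cite{Maz}, and for $BV$ functions it follows from the isotropic coarea formula $|Du| = \int_{\mathbb{R}} |D\chi_{\{u>s\}}|\, ds$ together with the fact that the Radon--Nikodym density $\frac{dDu}{d|Du|}$ agrees $|D\chi_{\{u>s\}}|$-a.e.\ with $\frac{dD\chi_{\{u>s\}}}{d|D\chi_{\{u>s\}}|}$ for a.e.\ $s$, so that the pointwise identity $\int_\Omega |Dv|_\phi = \int_\Omega \phi\big(\frac{dDv}{d|Dv|}\big)\, d|Dv|$ can be applied level by level. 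Granting this, the first step is to reduce the statement to the assertion that for a.e.\ $t$ the superlevel set $E_t := \{u > t\}$ minimises $\phi$-perimeter with respect to perturbations compactly supported in $\Omega$ — equivalently, that $\chi_{E_t}$ is of $\phi$-least gradient; the equivalence is elementary (test the least gradient inequality for $\chi_{E_t}$ against indicators $\chi_{E'}$ with $E' \triangle E_t \Subset \Omega$ for one implication, and truncate a general competitor to $[0,1]$ and apply the coarea formula for the other). Once this is known for a.e.\ $t$, Theorem \ref{thm:miranda} upgrades it to every $t$ and, simultaneously, to the non-strict superlevel sets, since $\chi_{\{u \ge t\}} = \lim_{s \uparrow t} \chi_{\{u > s\}}$ and $\chi_{\{u > t\}} = \lim_{s \downarrow t} \chi_{\{u > s\}}$ in $L^1(\Omega)$, with the limits taken along the full-measure set of levels $s$ for which $\chi_{\{u>s\}}$ is already known to be of $\phi$-least gradient.

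It then remains to show that the set of $t$ for which $E_t$ fails to minimise $\phi$-perimeter is Lebesgue-null. Suppose not; then, after reducing to a fixed compact subset of $\Omega$ and a fixed size of the perimeter gap, there is a set of levels $t$ of positive measure each admitting a competitor $F$ with $F \triangle E_t \Subset \Omega$ and $P_\phi(F, \Omega) \le P_\phi(E_t, \Omega) - \delta$ for some fixed $\delta > 0$. Using submodularity, $P_\phi(A \cup B, \Omega) + P_\phi(A \cap B, \Omega) \le P_\phi(A, \Omega) + P_\phi(B, \Omega)$, together with the symmetry under $u \mapsto -u$, $E \mapsto \Omega \setminus E$ (legitimate because $\phi$, being a norm, is even, so $P_\phi(E,\Omega) = P_\phi(\Omega\setminus E,\Omega)$), one may assume the competing sets are nested, say $F \subseteq E_t$ with $E_t \setminus F \Subset \Omega$. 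One then forms the competitor $\tilde u$ for $u$ obtained by lowering $u$ to the value $t$ on $E_t \setminus F$ and leaving it unchanged elsewhere; then $\tilde u = u$ outside a compact subset of $\Omega$, the superlevel sets of $\tilde u$ at heights $s<t$ coincide with those of $u$, and the coarea formula gives
$$\int_\Omega |D\tilde u|_\phi = \int_\Omega |Du|_\phi - \int_t^{+\infty}\!\! \Big( P_\phi(\{u>s\},\Omega) - P_\phi\big(F \cap \{u>s\},\Omega\big)\Big)\, ds.$$
The delicate point — which I would handle exactly as in \cite{BGG,Maz} — is to show that, having chosen the competing sets consistently across nearby levels (so as to control the sets $F \cap \{u>s\}$ as $s \downarrow t$, a step that is genuinely needed because an improvement at a single level is invisible to the coarea integral), the defect integral above is strictly positive on a set of $t$ of positive measure. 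This yields $\int_\Omega |D\tilde u|_\phi < \int_\Omega |Du|_\phi$ for such $\tilde u$, contradicting the $\phi$-least gradient property of $u$.

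The two substantial ingredients are therefore the anisotropic coarea formula — which is precisely what makes the passage between a function and its superlevel sets possible despite the linear growth of the functional — and this consistent choice of competing sets across levels in the contradiction argument; both are available in the literature (\cite{BGG} in the isotropic case, \cite{Maz} in general), and I would simply invoke them. The remaining reductions — truncating so that $u$ may be taken bounded and the competitors valued in $\{0,1\}$, and passing from a.e.\ $t$ to all $t$ and to $\{u\ge t\}$ via Theorem \ref{thm:miranda} — are routine.
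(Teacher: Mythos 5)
The paper offers no proof of Theorem \ref{thm:bgg}: it is stated as a known result, attributed to \cite{BGG} in the isotropic case and to \cite{Maz} in the anisotropic one, so there is no in-paper argument to measure your proposal against. Taken on its own terms, your outline is a correct roadmap to that literature proof, and the reductions you make explicit are sound: the anisotropic coarea formula, the equivalence between $\phi$-perimeter minimality of $\{u>t\}$ under compactly supported perturbations and the $\phi$-least gradient property of $\chi_{\{u>t\}}$, and the passage from almost every $t$ to every $t$ and to the non-strict superlevel sets $\{u\ge t\}$ by applying Theorem \ref{thm:miranda} along a full-measure set of good levels $s\uparrow t$ and $s\downarrow t$.

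The caveat concerns the step you yourself flag. As written, the contradiction argument does not close: from a single bad level $t$ with competitor $F\subseteq E_t$ you only know $P_\phi(F,\Omega)<P_\phi(E_t,\Omega)$, and since the perimeter is merely lower semicontinuous this gives no control on $P_\phi(F\cap\{u>s\},\Omega)$ versus $P_\phi(\{u>s\},\Omega)$ for $s>t$, so the defect integral in your display could vanish. Moreover, the proofs in \cite{BGG} and \cite{Maz} are not a completed version of this scheme: they argue directly for each fixed $t$, comparing $u$ with the $\max$ and $\min$ of $u$ against a function built from the competitor $F$ and exploiting submodularity of the $\phi$-total variation, which avoids any cross-level consistency issue rather than resolving one. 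So your proposal should be read as a citation-backed outline --- on the same footing as the paper's own treatment --- in which the contradiction argument of your second paragraph is to be replaced by, not completed into, the argument of the references.
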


In two dimensions, whenever $\phi$ is strictly convex, the only connected $\phi$-minimal surfaces are line segments. Therefore, in this case Theorem \ref{thm:bgg} implies that the boundary of superlevel sets of every $\phi$-least gradient function is a locally finite union of line segments.

In this language, the anisotropic least gradient problem \eqref{eq:problem} consists of finding a $\phi$-least gradient function with a prescribed trace. We summarise the discussion in the introduction in the following result (see \cite{Gor2021IUMJ,Mor}).

\begin{theorem}\label{thm:existence}
Suppose that $\Omega$ is strictly convex. Let $\phi$ be a strictly convex norm and suppose that $f \in L^1(\partial\Omega)$ is continuous $\mathcal{H}^{N-1}$-almost everywhere on $\partial\Omega$. Then, there exists a solution to problem \eqref{eq:problem}.
\end{theorem}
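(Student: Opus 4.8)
\emph{Proof strategy.} The plan is to obtain a solution as the ``stack'' of a monotone family of $\phi$-area-minimising sets, one for each level. For $t\in\mathbb R$ set $L_t=\{x\in\partial\Omega:f(x)>t\}$ and, for each $t$, consider the relaxed problem of minimising over measurable $E\subseteq\Omega$ the functional
\begin{equation}
\mathcal F_t(E)\ =\ P_\phi(E,\Omega)\ +\ \int_{\partial\Omega}\big|\,\mathrm{Tr}\,\chi_E-\chi_{L_t}\,\big|\,\phi(\nu_\Omega)\,\mathrm d\mathcal H^{1}.
\end{equation}
A minimiser $E_t$ exists by the direct method: a minimising sequence has bounded $\phi$-perimeter, hence is precompact in $L^1(\Omega)$, and both terms of $\mathcal F_t$ are lower semicontinuous along $L^1(\Omega)$-convergence of characteristic functions (the second term being exactly the boundary contribution that makes the relaxed functional well behaved). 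The heart of the matter is to show that when $\Omega$ is strictly convex this boundary term vanishes at every minimiser, that is $\mathrm{Tr}\,\chi_{E_t}=\chi_{L_t}$ $\mathcal H^{1}$-a.e.\ on $\partial\Omega$. Here one uses Theorem~\ref{thm:bgg}: since $E_t$ is $\phi$-area-minimising in $\Omega$ and $\phi$ is strictly convex, in two dimensions the free part of $\partial E_t$ (the part lying in $\Omega$) is a locally finite union of line segments, and if the trace failed to match on some boundary arc then a comparison argument pushing the offending segment onto that arc would strictly decrease $\mathcal F_t$, the strict gain coming precisely from strict convexity of $\partial\Omega$. This is the barrier condition, which is automatic for strictly convex domains.

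Next I would turn the sets $E_t$ into a nested family. For $s<t$ we have $L_s\supseteq L_t$, and $E_s\cup E_t$, $E_s\cap E_t$ are admissible for levels $s$ and $t$ respectively; submodularity of the $\phi$-perimeter together with the obvious submodularity of the boundary term gives $\mathcal F_s(E_s\cup E_t)+\mathcal F_t(E_s\cap E_t)\le\mathcal F_s(E_s)+\mathcal F_t(E_t)$, so $E_s\cup E_t$ and $E_s\cap E_t$ are again minimisers and we may replace the sets accordingly. Running this over a countable dense set of levels and passing to the limit (here Theorem~\ref{thm:miranda} is convenient) yields a nested family $\{E_t\}_{t\in\mathbb R}$ of $\phi$-area-minimising sets with $\mathrm{Tr}\,\chi_{E_t}=\chi_{L_t}$. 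Set $u(x):=\sup\{t\in\mathbb Q:x\in E_t\}$, truncated by $\pm\|f\|_{L^\infty(\partial\Omega)}$. Then $u\in BV(\Omega)$ with $\{u>t\}=E_t$ up to a null set for a.e.\ $t$, and by the coarea formula $\int_\Omega|Du|_\phi=\int_{\mathbb R}P_\phi(E_t,\Omega)\,\mathrm dt$, with the same decomposition valid for any competitor; level-wise minimality of the $E_t$ then forces $u$ to be of $\phi$-least gradient.

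It remains to identify the trace of $u$ with $f$, which is where the assumption that $f$ is continuous $\mathcal H^{1}$-a.e.\ is used. If $x\in\partial\Omega$ is a continuity point of $f$ with $f(x)=c$, then for every $t\neq c$ the point $x$ lies in the relative interior in $\partial\Omega$ of either $L_t$ or its complement, so $x\notin\partial L_t$; combined with $\mathrm{Tr}\,\chi_{E_t}=\chi_{L_t}$ and monotonicity in $t$, this gives $\mathrm{Tr}\,\chi_{E_t}(x)=1$ for $t<c$ and $\mathrm{Tr}\,\chi_{E_t}(x)=0$ for $t>c$, hence $\mathrm{Tr}\,u(x)=c=f(x)$. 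Since the discontinuity set of $f$ is $\mathcal H^{1}$-null, $\mathrm{Tr}\,u=f$ $\mathcal H^{1}$-a.e., so $u$ solves \eqref{eq:problem}. I expect the genuinely delicate step to be the comparison argument showing that the relaxed minimisers attain the boundary data on a strictly convex domain (equivalently, verifying the barrier condition), together with the bookkeeping needed to pass from the ``for a.e.\ $t$'' statements to a single function $u$ with the correct trace; the remaining ingredients are the by-now-standard two-dimensional least gradient machinery.
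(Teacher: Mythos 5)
First, a point of orientation: the paper does not prove Theorem \ref{thm:existence} at all --- it is quoted as background from \cite{Gor2021IUMJ,Mor} --- so there is no internal proof to compare with; your sketch has to stand on its own. Its overall skeleton (relaxed level-set problems with the boundary penalty weighted by $\phi(\nu_\Omega)$, nesting of the minimisers via submodularity, stacking via the coarea formula, and identification of the trace at continuity points of $f$) is indeed the standard architecture of the cited works, and those parts are fine as a sketch.

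The genuine gap is the step you yourself call the heart of the matter: the claim that on a strictly convex domain \emph{every} minimiser of the relaxed functional $\mathcal F_t$ attains the boundary trace, i.e.\ $\mathrm{Tr}\,\chi_{E_t}=\chi_{L_t}$, for an \emph{arbitrary} measurable set $L_t\subset\partial\Omega$. That claim is false, and this very paper supplies the counterexample: take $L=F_\infty$ with condition \eqref{eq:maininequality} and $\mathcal H^1(F_\infty)>0$. If a relaxed minimiser $E$ attained the trace $\chi_{F_\infty}$, then for any $u\in BV(\Omega)$ with $u|_{\partial\Omega}=\chi_{F_\infty}$, Lemma \ref{lem:superlevelsets} and the coarea formula give $\int_\Omega|Du|_\phi\geq\int_0^1 P_\phi(\{u\geq t\},\Omega)\,dt\geq P_\phi(E,\Omega)$, so $\chi_E$ would solve \eqref{eq:problem} with $f=\chi_{F_\infty}$, contradicting Proposition \ref{prop:nonexistence}. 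So strict convexity of $\Omega$ alone cannot force the penalty term to vanish; the hypothesis that $f$ is continuous $\mathcal H^{1}$-a.e.\ must be used already at this per-level step (note that for all but countably many $t$ the indicator $\chi_{\{f>t\}}$ is itself continuous $\mathcal H^1$-a.e., and this is exactly the structure exploited in \cite{Gor2021IUMJ,Mor}), not merely in the final identification of $\mathrm{Tr}\,u$ with $f$. Relatedly, your comparison argument (``push the offending segment onto the arc'') is plausible when the discrepancy set contains a boundary arc, but the hard case --- and the one your sketch silently assumes away --- is precisely when the set where the trace fails is irregular (e.g.\ of empty interior but positive measure), which is where the real work of the cited proofs lies.
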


As a particular case, whenever $\Omega$ is strictly convex and $f \in BV(\partial\Omega)$ is a (one-dimensional) function of bounded variation, there exists a solution to problem \eqref{eq:problem}. As a consequence, in order to prove Theorem \ref{thm:tracespace}, we need to consider boundary data with very low regularity.

\section{Construction}

In this Section, we prove the main result of the paper, i.e. Theorem \ref{thm:tracespace}. First, we start with some motivations for the construction; in Section \ref{sec:restriction}, we argue that we can restrict our attention to characteristic functions and present a simple example on a non-strictly convex domain. Then, in Section \ref{sec:notation} we introduce the notation used in the proofs in Section \ref{sec:proof}.

\subsection{Basic idea}\label{sec:restriction}

The first result is a simple exercise in the theory of BV functions.

\begin{lemma}\label{lem:superlevelsets}
Let $u \in BV(\Omega)$ and $f \in L^1(\partial\Omega)$. The following conditions are equivalent:

(1) $Tu = f$;

(2) For all but countably many $t \in \mathbb{R}$, we have $T\chi_{\{ u \geq t \}} = \chi_{ \{ f \geq t \} }$;

(3) For almost all $t \in \mathbb{R}$, we have $T\chi_{\{ u \geq t \}} = \chi_{ \{ f \geq t \} }$.
\end{lemma}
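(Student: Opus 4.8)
The plan is to prove the three conditions are equivalent by showing $(1) \Rightarrow (2) \Rightarrow (3) \Rightarrow (1)$, with the two nontrivial implications being $(1) \Rightarrow (2)$ and $(3) \Rightarrow (1)$; the implication $(2) \Rightarrow (3)$ is immediate since a countable set has measure zero. The basic tool throughout will be the coarea formula for BV functions, both in $\Omega$ and on the boundary manifold $\partial\Omega$, together with the compatibility of the trace operator with the layer-cake decomposition.

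For $(1) \Rightarrow (2)$, I would first recall that for $u \in BV(\Omega)$, the superlevel set $\{u \geq t\}$ has finite perimeter for a.e. $t$, and in fact by the coarea formula $\int_{-\infty}^{\infty} P(\{u \geq t\}, \Omega)\, dt = \int_\Omega |Du| < \infty$, so $\chi_{\{u \geq t\}} \in BV(\Omega)$ for a.e.\ $t$; a separate argument handles the remaining countably many $t$. The key point is that the trace operator $T: BV(\Omega) \to L^1(\partial\Omega)$ is compatible with truncations and with the coarea structure: one has the layer-cake identity $u = \int_{-\infty}^0 (\chi_{\{u \geq t\}} - 1)\, dt + \int_0^\infty \chi_{\{u \geq t\}}\, dt$ in $L^1(\Omega)$, and applying $T$ (which is a bounded linear operator on the relevant truncated pieces, using that $T$ commutes with monotone limits of uniformly BV-bounded sequences in an appropriate sense) yields $f = Tu = \int_{-\infty}^0 (T\chi_{\{u\geq t\}} - 1)\, dt + \int_0^\infty T\chi_{\{u \geq t\}}\, dt$ pointwise $\mathcal{H}^{1}$-a.e.\ on $\partial\Omega$. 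On the other hand, $f$ itself admits the layer-cake decomposition $f = \int_{-\infty}^0 (\chi_{\{f \geq t\}} - 1)\, dt + \int_0^\infty \chi_{\{f \geq t\}}\, dt$. Each $T\chi_{\{u \geq t\}}$ is (for a.e.\ $t$) the characteristic function of some set $A_t \subset \partial\Omega$, and the family $\{A_t\}$ is essentially decreasing in $t$; comparing the two decompositions of $f$ and using monotonicity forces $A_t = \{f \geq t\}$ up to $\mathcal{H}^1$-null sets for all but countably many $t$ (the exceptional values being the at most countably many atoms of the distributional derivative structure / jump levels).

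For $(3) \Rightarrow (1)$, I would run the layer-cake identity in the other direction: knowing $T\chi_{\{u \geq t\}} = \chi_{\{f \geq t\}}$ for a.e.\ $t$, I integrate over $t$ and use continuity/compatibility of $T$ with the decomposition $u = \int_{-\infty}^0 (\chi_{\{u\geq t\}}-1)\,dt + \int_0^\infty \chi_{\{u\geq t\}}\,dt$ to conclude $Tu = \int_{-\infty}^0 (\chi_{\{f\geq t\}}-1)\,dt + \int_0^\infty \chi_{\{f\geq t\}}\,dt = f$ as elements of $L^1(\partial\Omega)$. The main obstacle in the whole argument is making precise the sense in which the trace operator commutes with the coarea/layer-cake decomposition: $T$ is not continuous with respect to weak-$*$ convergence in $BV$, so one cannot naively pass limits. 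The correct tool is that $T$ is continuous with respect to \emph{strict} convergence in $BV$, or alternatively one uses the explicit characterization of the trace via $\mathcal{H}^{1}$-a.e.\ limits of averages over balls $B_r(x) \cap \Omega$ shrinking to $x \in \partial\Omega$, which interacts cleanly with monotone rearrangement in $t$ by Fubini. I would carry this out using the approximation of $u$ by its truncations $u_k = (u \wedge k) \vee (-k)$, for which $Tu_k \to Tu$ in $L^1(\partial\Omega)$ and the truncated layer-cake integral is a genuine (finite-measure-in-$t$) Bochner integral in $L^1(\partial\Omega)$, so that $T$ passes under the integral sign directly; the countably-many-exceptional-$t$ bookkeeping then comes from the fact that a monotone family of $L^1(\partial\Omega)$ functions has at most countably many $\mathcal{H}^1$-discontinuity levels.
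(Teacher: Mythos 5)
Your proposal is correct in substance but takes a genuinely different route from the paper's, which is much shorter: the paper does not prove $(1)\Rightarrow(2)$ at all, but cites it from an earlier work of the author (\cite[Lemma 2.12]{Gor2020PAMS}); your layer-cake/Bochner-integral argument is essentially a sketch of the content of that cited lemma. More interestingly, for $(3)\Rightarrow(1)$ the paper does not re-run any integration argument: it simply sets $g = Tu$ (the trace exists for any $BV$ function), applies the already-available implication $(1)\Rightarrow(2)$ to $u$ with trace $g$ to get $T\chi_{\{u\geq t\}}=\chi_{\{g\geq t\}}$ for all but countably many $t$, and combines this with hypothesis $(3)$ to conclude $\{f\geq t\}=\{g\geq t\}$ for a.e.\ $t$, hence $f=g$ a.e. This bootstrap is cheaper than your second pass through the layer-cake identity, since it never requires commuting $T$ with an integral a second time; your version works too, but only after the commutation machinery of the first implication has been fully justified. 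One caveat on your first implication: the parenthetical claim that $T$ ``commutes with monotone limits of uniformly BV-bounded sequences'' is false as stated (take $\chi_{E_n}$ with $E_n$ a shrinking collar neighbourhood of $\partial\Omega$: the sequence is monotone, uniformly bounded in $BV$, converges to $0$ in $L^1(\Omega)$, yet all traces equal $1$); what actually saves your argument is the mechanism you invoke afterwards, namely that $T$ is a bounded linear operator for the $BV$ norm and hence passes through the Bochner integral of the truncated layer-cake decomposition, combined with $Tu_k=(Tu\wedge k)\vee(-k)\to Tu$ for the truncations. With that correction, and with the standard (but nontrivial) facts that the trace of a set of finite perimeter is $\{0,1\}$-valued $\mathcal{H}^1$-a.e.\ and that the family $A_t$ admits a monotone version with at most countably many exceptional levels, your outline closes.
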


\begin{proof}
The implication $(1) \Rightarrow (2)$ was shown in \cite[Lemma 2.12]{Gor2020PAMS}. The implication $(2) \Rightarrow (3)$ is clear, since a countable set is of zero $\mathcal{L}^1$-measure. For the implication $(3) \Rightarrow (1)$, suppose that $Tu = g \in L^1(\partial\Omega)$. Then, by the implication $(1) \Rightarrow (2)$, we have that for all but countably many $t \in \mathbb{R}$ we have $T\chi_{\{ u \geq t \}} = \chi_{\{ g \geq t \} }$. Hence, for almost all $t$ we have $\{ f \geq t \} = \{ g \geq t \}$, so $f = g$ a.e.
\end{proof}

The next two results show that in order to describe solutions to \eqref{eq:problem} in the case when the boundary datum is a characteristic function of some set, it is sometimes enough to look for solutions which are characteristic functions themselves.

\begin{lemma}\label{lem:characteristicfunctions}
Suppose that $f = \chi_F$, where $F \subset \partial\Omega$. Let $u \in BV(\Omega)$ be a solution to problem \eqref{eq:problem}. Then, there exists a set $E \subset \Omega$ of finite perimeter such $\chi_E$ is a solution to problem \eqref{eq:problem}. 
\end{lemma}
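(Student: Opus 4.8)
The plan is to start from a given solution $u \in BV(\Omega)$ to \eqref{eq:problem} with datum $f = \chi_F$ and to extract a superlevel set that is itself a solution. First I would invoke Theorem \ref{thm:bgg}: since $u$ is a $\phi$-least gradient function, every superlevel set $\{u \geq t\}$ is also of $\phi$-least gradient. The point is to pick a level $t \in (0,1)$ at which the trace behaves correctly. By Lemma \ref{lem:superlevelsets}, for all but countably many $t$ we have $T\chi_{\{u \geq t\}} = \chi_{\{f \geq t\}}$, and since $f = \chi_F$ takes only the values $0$ and $1$, we have $\{f \geq t\} = F$ for every $t \in (0,1)$. Hence for all but countably many $t \in (0,1)$ the set $E := \{u \geq t\}$ satisfies $T\chi_E = \chi_F = f$, so $\chi_E$ is an admissible competitor for \eqref{eq:problem}. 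Fix such a $t$.

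It remains to check that $\chi_E$ is in fact a minimiser, i.e. that $P_\phi(E, \Omega) \leq P_\phi(E', \Omega)$ for any admissible competitor $\chi_{E'}$. Here I would use the coarea formula for the anisotropic total variation,
\[
\int_\Omega |Du|_\phi = \int_{-\infty}^{\infty} P_\phi(\{u \geq s\}, \Omega)\, ds,
\]
together with the fact that $\chi_E$ (being a superlevel set of a $\phi$-least gradient function) is itself $\phi$-least gradient, so it is a minimiser of $P_\phi(\cdot,\Omega)$ among all sets with the same trace — which is precisely the content of being a $\phi$-least gradient function whose trace equals $f$. Concretely: if $\chi_{E'}$ were a strictly better competitor, then $\chi_E$ would fail to be of $\phi$-least gradient (replacing $\chi_E$ by $\chi_{E'}$ lowers the energy while keeping compact-support perturbations admissible since the traces agree), contradicting Theorem \ref{thm:bgg}. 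Thus $\chi_E$ solves \eqref{eq:problem}.

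The only genuine subtlety — and the step I would treat most carefully — is the interplay between "minimiser of \eqref{eq:problem}" and "$\phi$-least gradient function with the correct trace." One must make sure that among admissible $E'$ it suffices to compare with $\chi_E$ as a $\phi$-least gradient competitor: this works because any $\chi_{E'}$ with $T\chi_{E'} = f = T\chi_E$ differs from $\chi_E$ by a $BV$ function of zero trace (equivalently compact support), so the least gradient inequality applies directly and gives $\int_\Omega |D\chi_E|_\phi \leq \int_\Omega |D\chi_{E'}|_\phi$. Combined with admissibility of $\chi_E$, this is exactly minimality. I would also note for completeness that $E$ has finite perimeter because $P_\phi(E,\Omega) \leq \int_\Omega |Du|_\phi < \infty$ and the $\phi$-perimeter is comparable to the Euclidean perimeter. $\hfill\Box$ \vskip.3cm
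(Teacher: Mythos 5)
Your proposal is correct and follows essentially the same route as the paper: apply Theorem \ref{thm:bgg} to get that every superlevel set is of $\phi$-least gradient, use Lemma \ref{lem:superlevelsets} to pick $t \in (0,1)$ with $T\chi_{\{u \geq t\}} = \chi_F$, and set $E = \{u \geq t\}$. The paper leaves implicit the final step you spell out (a $\phi$-least gradient function with trace $f$ minimises against any competitor with the same trace, since the difference has zero trace); your mention of the coarea formula is unnecessary but harmless.
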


\begin{proof}
By Theorem \ref{thm:bgg}, for all $t \in \mathbb{R}$ the function $\chi_{\{ u \geq t \}}$ is a function of $\phi$-least gradient. But, by Lemma \ref{lem:superlevelsets}, for all but countably many $t \in (0,1)$ we have $T\chi_{\{ u \geq t \}} = \chi_{\{ \chi_F \geq t \} } = \chi_F$. Therefore, it suffices to take such $t \in (0,1)$ and set $E = \{ u \geq t \}$.
\end{proof}

\begin{lemma}\label{lem:uniquesolution}
Let $f = \chi_F$, where $F \subset \partial\Omega$. Suppose that there exists exactly one set $E \subset \Omega$ of finite perimeter $\chi_E$ which is a solution of \eqref{eq:problem}. Then, if $u \in BV(\Omega)$ is a solution to \eqref{eq:problem}, we have $u = \chi_E$.
\end{lemma}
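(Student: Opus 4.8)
The plan is to argue by contradiction using the superlevel set decomposition together with the hypothesis of uniqueness. Suppose $u \in BV(\Omega)$ is a solution to \eqref{eq:problem} with $Tu = \chi_F$, and suppose that $u \neq \chi_E$. By Theorem \ref{thm:bgg}, every superlevel set $\{u \geq t\}$ has $\phi$-least gradient characteristic function, and by Lemma \ref{lem:superlevelsets} (the implication $(1)\Rightarrow(2)$ applied to $u$ with boundary datum $\chi_F$), for all but countably many $t \in (0,1)$ we have $T\chi_{\{u \geq t\}} = \chi_{\{\chi_F \geq t\}} = \chi_F$. Hence for every such $t$, the set $\{u \geq t\}$ is a set of finite perimeter whose characteristic function solves \eqref{eq:problem}; by the uniqueness hypothesis, $\{u \geq t\} = E$ up to a set of measure zero. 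Similarly, for $t \leq 0$ we have $\{\chi_F \geq t\} = \partial\Omega$, so $T\chi_{\{u \geq t\}} = \chi_{\partial\Omega}$, which forces (again by uniqueness applied to the boundary datum $\chi_{\partial\Omega}$, whose unique solution is $\chi_\Omega$, or more directly by the maximum principle / the fact that $\chi_\Omega$ is the only $\phi$-least gradient function with that trace) $\{u \geq t\} = \Omega$; and for $t > 1$, $\{\chi_F \geq t\} = \emptyset$, forcing $\{u \geq t\} = \emptyset$.

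I would then reconstruct $u$ from its superlevel sets via the layer-cake formula. For a.e. $x \in \Omega$,
\begin{equation}
u(x) = \int_0^\infty \chi_{\{u \geq t\}}(x)\, dt - \int_{-\infty}^0 \left(1 - \chi_{\{u \geq t\}}(x)\right) dt.
\end{equation}
Using the identifications above — $\{u \geq t\} = \Omega$ for $t \leq 0$, $\{u \geq t\} = E$ for a.e. $t \in (0,1)$, and $\{u \geq t\} = \emptyset$ for $t > 1$ — the negative part of the integral vanishes and the positive part gives $u(x) = \int_0^1 \chi_E(x)\, dt = \chi_E(x)$ for a.e. $x$. Thus $u = \chi_E$ in $L^1(\Omega)$, contradicting $u \neq \chi_E$.

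The only genuinely delicate point is handling the cases $t \leq 0$ and $t > 1$: the uniqueness hypothesis is stated for the datum $\chi_F$, not for $\chi_{\partial\Omega}$ or $\chi_\emptyset$, so one cannot invoke it verbatim there. This is easily circumvented: since $u \in BV(\Omega)$ and $Tu = \chi_F$ takes values in $\{0,1\}$, the comparison principle for $\phi$-least gradient functions (or a direct truncation argument — replacing $u$ by $\min(\max(u,0),1)$ does not increase the $\phi$-total variation and does not change the trace) shows $0 \leq u \leq 1$ a.e., so $\{u \geq t\}$ is already $\Omega$ for $t \leq 0$ and $\emptyset$ for $t > 1$ without any appeal to uniqueness. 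With that observation in place the argument is complete; all remaining steps are the routine BV manipulations indicated above.
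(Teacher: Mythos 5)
Your proposal is correct and follows essentially the same route as the paper: decompose $u$ into superlevel sets via Theorem \ref{thm:bgg} and Lemma \ref{lem:superlevelsets}, identify $\{u \geq t\} = E$ for a.e.\ $t \in (0,1)$ by the uniqueness hypothesis, handle $t \leq 0$ and $t > 1$ by the fact that the only $\phi$-least gradient functions with trace $1$ or $0$ are the constants (exactly as the paper does), and reconstruct $u = \chi_E$ from its superlevel sets. Your explicit layer-cake formula and the truncation alternative for the edge cases are just slightly more detailed versions of the paper's argument.
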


\begin{proof}
By Lemma \ref{lem:superlevelsets}, for all but countably many $t \in \mathbb{R}$ we have $T\chi_{\{ u \geq t \}} = \chi_{\{ f \geq t \}}$. On the other hand, by Theorem \ref{thm:bgg}, for all $t \in \mathbb{R}$ the function $\chi_{\{ u \geq t \}}$ is a function of $\phi$-least gradient. We consider three cases. For $t > 1$ we have $T\chi_{\{ u \geq t \}} = \chi_{\{ f \geq t \}} = 0$, but the only $\phi$-least gradient function with this trace is constant and equal to zero, so $\{ u \geq t \}$ is a set of zero measure. For $t < 0$ we have $T\chi_{\{ u \geq t \}} = \chi_{\{ f \geq t \}} = 1$, but again the only $\phi$-least gradient function with this trace is constant and equal to one, so $\{ u \geq t \}$ is a set of full measure. Finally, for $t \in (0,1)$ we have $T\chi_{\{ u \geq t \}} = \chi_{\{ f \geq t \}} = \chi_F$, and by our assumption we have $\{ u \geq t \} = E$. In this way, we prescribed the superlevel sets for almost all $t \in \mathbb{R}$ and we obtain $u= \chi_E$.
\end{proof}

Therefore, if the boundary datum is a characteristic function, we may only look for solutions which are also characteristic functions. Let us note that the proofs of every Lemma only used basic properties of BV functions and functions of $\phi$-least gradient, so they are also valid for other anisotropies and in higher dimensions.

Now, we present a very simple example, which will serve as a motivation for the construction in Sections \ref{sec:notation} and \ref{sec:proof}. We will show that given two anisotropic norms $\phi_1$ and $\phi_2$, without the assumption of strict convexity of $\Omega$ it can be very easy to construct a boundary datum such that the problem is solvable for $\phi_1$ and it is not solvable for $\phi_2$. As we will see in the next Sections, the case when $\Omega$ is strictly convex is more difficult to handle. This is due to Theorem \ref{thm:existence}; counterexamples to existence are necessarily more refined.

\begin{example}
Let $\Omega = [0,1]^2$. Our goal is to find a boundary datum $f$ for which there is no solution to \eqref{eq:problem} in the isotropic case, but there exists a norm for which there is a solution to \eqref{eq:problem}. Fix four points on $\partial\Omega$: let $p_{(0,0)} = (0,a)$, { $p_{(0,1)} = (b,0)$}, $p_{(1,0)} = (1-b,0)$ and $p_{(1,1)} = (1,a)$, where { $a \in (0,1)$ and $b \in (0,\frac12)$}. For $i = 0,1$, denote by $\Gamma_i$ the arc on $\partial\Omega$ from $p_{(i,0)}$ to $p_{(i,1)}$ (we mean the shorter of two such arcs on $\partial\Omega$), and by $\ell_i$ the line segment from $p_{(i,0)}$ to $p_{(i,1)}$. We denote by $\ell'_0$ the line segment from $p_{(0,0)}$ to $p_{(1,1)}$ and by $\ell'_1$ the line segment from $p_{(0,1)}$ to $p_{(1,0)}$. Finally, we set $F = \Gamma_1 \cup \Gamma_2$ and $f= \chi_F$.

Take $\phi_1 = l_2$ and $\phi_2 = l_3$. The norms $\phi_1$ and $\phi_2$ are strictly convex, all connected minimal surfaces are line segments, so we need to consider only two competitors. The first one (denoted by $E_1$) is the set with two connected components, which are triangles whose boundary (in $\mathbb{R}^2$) is composed of the line segments $\ell_i$ and the arcs $\Gamma_i$. The second one (denoted by $E_2$) is the rectangle whose boundary is composed of the line segments $\ell'_0$, $\ell'_1$ and the arcs $\Gamma_i$ (in which case the trace condition is violated, so a solution does not exist).

Then, the $l_2$-perimeter (i.e. the Euclidean perimeter) of the $E_1$ in $\Omega$ equals $2\sqrt{a^2 + b^2}$ and, with a slight abuse of notation, the $l_2$-perimeter of $E_2$ in $\Omega$ is $2 - 2b$ (technically, since $\ell'_1$ is a subset of $\partial\Omega$, only $\ell'_0$ enters the calculation of the $l_2$-perimeter of $E_2$ in $\Omega$; however, for the purpose of comparison with $E_1$, we also count the length of the line segment $\ell'_1$ on which $E_2$ violates the trace condition). On the other hand, the $l_3$-perimeter of $E_1$ equals $2\sqrt[3]{a^3 + b^3}$ and the $l_3$-perimeter of $E_2$ is again $2 - 2b$. We choose the parameters $a,b$ so that we get different minimal sets for the $l_2$- and $l_3$-perimeters. For instance, choose $a = \frac{1}{2}$ and $b = \frac{2}{5}$. Then, for the $l_3$-perimeter the (unique) solution to \eqref{eq:problem} is $E_1$, but for the $l_2$-perimeter, there is no solution.
\end{example}

\subsection{Notation}\label{sec:notation}

Let us introduce notation for the remainder of the paper. Set $\Omega = B(0,1) \subset \mathbb{R}^2$. Below, we presents a general framework for the construction of a set with positive $\mathcal{H}^1$-measure on $\partial\Omega$, which is homeomorphic to the Cantor set. We will use a particular case of this construction to find a boundary datum, for which (for some given strictly convex norms $\phi_1, \phi_2$ of class $C^2$) there exists a solution to problem \eqref{eq:problem} for $\phi_1$, but there is no solution for $\phi_2$. In relation to the least gradient problem, the first example of this type appeared in \cite{ST} (for other occurrences, see \cite{DosS,Gor2018CVPDE,Kli}).

Denote by $\alpha$ the angular coordinate on $\partial B(0,1)$. Denote the origin by $q$. The zeroth step of the construction is as follows: we fix $\alpha_0 \in (0,\frac{\pi}{2})$ and take two points $p_0, p_1$ such that the angle $p_0 q p_1$ is equal to $\alpha_0$. We denote the arc on $\partial\Omega$ from $p_0$ to $p_1$ by $F_0$ (here and in the whole construction, we always mean the shorter of the two arcs on the boundary, { and we assume that the arcs are closed}). Moreover, we denote by $E_0$ the { open} bounded set whose boundary is composed of the arc $F_0$ and the line segment $\overline{p_0 p_1}$ { (with endpoints)}.

For the first step of the construction, we fix $\alpha_1 \in (0,\frac{\alpha_0}{2})$. Then:

\begin{enumerate}

\item Rename the points: denote $p_{(0,0)} = p_0$ and $p_{(1,1)} = p_1$;

\item Add two more points: we choose $p_{(0,1)}, p_{(1,0)} \in F_0$ so that the angles $p_{(0,0)} q p_{(0,1)}$ and $p_{(1,0)} q p_{(1,1)}$ are equal to $\alpha_1$;

\item Denote the arc on $\partial\Omega$ from $p_{(0,0)}$ to $p_{(0,1)}$ by $\Gamma_{(0)}$. Similarly, we denote the arc on $\partial\Omega$ from $p_{(1,0)}$ to $p_{(1,1)}$ by $\Gamma_{(1)}$;

\item Denote by $\ell_{(0)}$ the line segment $\overline{p_{(0,0)} p_{(0,1)}}$ and by $\ell_{(1)}$ the line segment $\overline{p_{(1,0)} p_{(1,1)}}$;

\item Denote by $\Delta_{(0)}$ the { open} bounded set whose boundary is composed of $\Gamma_{(0)}$ and $\ell_{(0)}$. Similarly, the { open} bounded set whose boundary is composed of $\Gamma_{(1)}$ and $\ell_{(1)}$ is denoted by $\Delta_{(1)}$;

\item Set $F_1 = \Gamma_{(0)} \cup \Gamma_{(1)}$; then, we have $F_1 \subset F_0$.

\item Set $E_1 = \Delta_{(0)} \cup \Delta_{(1)}$; observe that $E_1 \subset E_0$.

\end{enumerate}

Now, we present the $n$-th step of the construction ($n \geq 2$). We denote points in the boundary of the set $F_{n-1}$ using a binary sequence $(m_1,...,m_n)$. For all $(m_1,...,m_n) \in \{ 0, 1 \}^{n}$, fix $\alpha_{(m_1,...,m_n)} \in (0,\frac{\alpha_{(m_1,...,m_{n-1})}}{2})$. Then:

\begin{enumerate}

\item Rename the $2^n$ points $p_{(m_1,...,m_n)}$ from the previous step. In the following way: depending on the value of $m_n$, we set
$$ p_{(m_1,...,m_{n-1},0,0)} := p_{(m_1,...,m_{n-1},0)}$$
and
$$ p_{(m_1,...,m_{n-1},1,1)} := p_{(m_1,...,m_{n-1},1)};$$

\item Add $2^n$ more points: we choose $p_{(m_1,...,m_{n-1},0,1)}, p_{(m_1,...,m_{n-1},1,0)} \in \Gamma_{(m_1,...,m_{n-1})}$ so that the angle $p_{(m_1,...,m_{n-1},0,0)} q p_{(m_1,...,m_{n-1},0,1)}$ is equal to $\alpha_{(m_1,...,m_{n-1},0)}$ and the angle $p_{(m_1,...,m_{n-1},1,0)} q p_{(m_1,...,m_{n-1},1,1)}$ is equal to $\alpha_{(m_1,...,m_{n-1},1)}$;

\item Denote the arc on $\partial\Omega$ from $p_{(m_1,...,m_n,0)}$ to $p_{(m_1,...,m_n,1)}$ by $\Gamma_{(m_1,...,m_n)}$;

\item Denote by $\ell_{(m_1,...,m_n)}$ the line segment $\overline{p_{(m_1,...,m_n,0)} p_{(m_1,...,m_n,1)}}$;

\item Denote by $\Delta_{(m_1,...,m_n)}$ the { open} bounded set whose boundary is 
composed of $\Gamma_{(m_1,...,m_n)}$ and $\ell_{(m_1,...,m_n)}$;

\item Set
$$F_n = \bigcup_{(m_1,...,m_n) \in \{ 0,1 \}^n} \Gamma_{(m_1,...,m_n)}.$$
Then, { the sets $F_n$ are closed and} we have $F_n \subset F_{n-1}$.

\item Set 
$$E_n = \bigcup_{(m_1,...,m_n) \in \{ 0,1 \}^n} \Delta_{(m_1,...,m_n)}.$$
Then, { the sets $E_n$ are open and} we have $E_n \subset E_{n-1}$.
\end{enumerate}

We observe that 
$$\bigcap_{n = 0}^\infty E_n = \emptyset$$
and that
\begin{equation}
F_\infty = \bigcap_{n=0}^{\infty} F_n
\end{equation}
is homeomorphic to the Cantor set. Depending on the choice of the family of angles $\alpha_{(m_1,...,m_n)}$, it may have zero or positive $\mathcal{H}^1$-measure.

This ends the construction of the Cantor set $F_\infty$, whose characteristic function will be the boundary datum. In the process, we also constructed approximations $F_n$ of the boundary datum and sets $E_n$ whose characteristic function is a candidate for a solution for boundary data $\chi_{F_n}$. In order to describe another candidate for a solution for boundary data $\chi_{F_n}$, we introduce the following additional notation.

Set { $E'_0 = E_0 \cup (\overline{p_0 p_1} \setminus \{ p_0, p_1 \})$}. Denote by $\Delta'$ the { open} bounded set whose boundary is composed of { the} arc on $\partial\Omega$ from $p_{(0,1)}$ to $p_{(1,0)}$ and the line segment $\overline{p_{(0,1)} p_{(1,0)}}$. Then, set $E'_1 = E'_0 \setminus \Delta'$. From now on, let $n \geq 2$.

\begin{enumerate}\setcounter{enumi}{7}

\item Denote by $\Gamma'_{(m_1,...,m_{n-1})}$ the arc on $\partial\Omega$ from $p_{(m_1,...,m_{n-1},0,1)}$ to $p_{(m_1,...,m_{n-1},1,0)}$;

\item Denote by $\ell'_{(m_1,...,m_{n-1})}$ the line segment $\overline{p_{(m_1,...,m_{n-1},0,1)} p_{(m_1,...,m_{n-1},1,0)}}$;

\item Denote by $\Delta'_{(m_1,...,m_{n-1})}$ the { open} bounded set whose boundary is composed of $\Gamma'_{(m_1,...,m_{n-1})}$ and $\ell'_{(m_1,...,m_{n-1})}$;

\item Set
$$ E'_n = E'_{n-1} \setminus \bigg( \bigcup_{(m_1,...,m_{n-1}) \in \{ 0,1 \}^{n-1}} \Delta'_{(m_1,...,m_{n-1})} \bigg).$$
Then, { the sets $E'_n$ are closed relative to $\Omega$,} we have $E'_n \subset E'_{n-1}$, and the intersection $E'_\infty = \bigcap_{n=0}^\infty E'_n$ { is closed relative to $\Omega$ and} has positive Lebesgue measure.

\end{enumerate}

Moreover, we may require that at every step of the construction the angles $\alpha_{(m_1,...,m_{n-1},0)}$ and $\alpha_{(m_1,...,m_{n-1},1)}$ are equal, so that the line segments $\ell_{(m_1,...,m_n)}$ and $\ell'_{(m_1,...,m_n)}$ are parallel. Whenever we fix $(m_1,...,m_{n-1}) \in \{ 0, 1 \}^{n-1}$, we denote the common value by $\alpha_{(m_1,...,m_n)}$. From now on, we consider only such configurations. The situation is presented in Figures \ref{fig:notation} and \ref{fig:notationpart2}. Both Figures contain the same configuration of points; in the first one we highlight the notation for line segments and arcs between the four points, and in the second one we present the sets $\Delta_{(m_1,...,m_n)}$ and $\Delta'_{(m_1,...,m_{n-1})}$.

\begin{figure}
    \centering
    \includegraphics[scale=0.4]{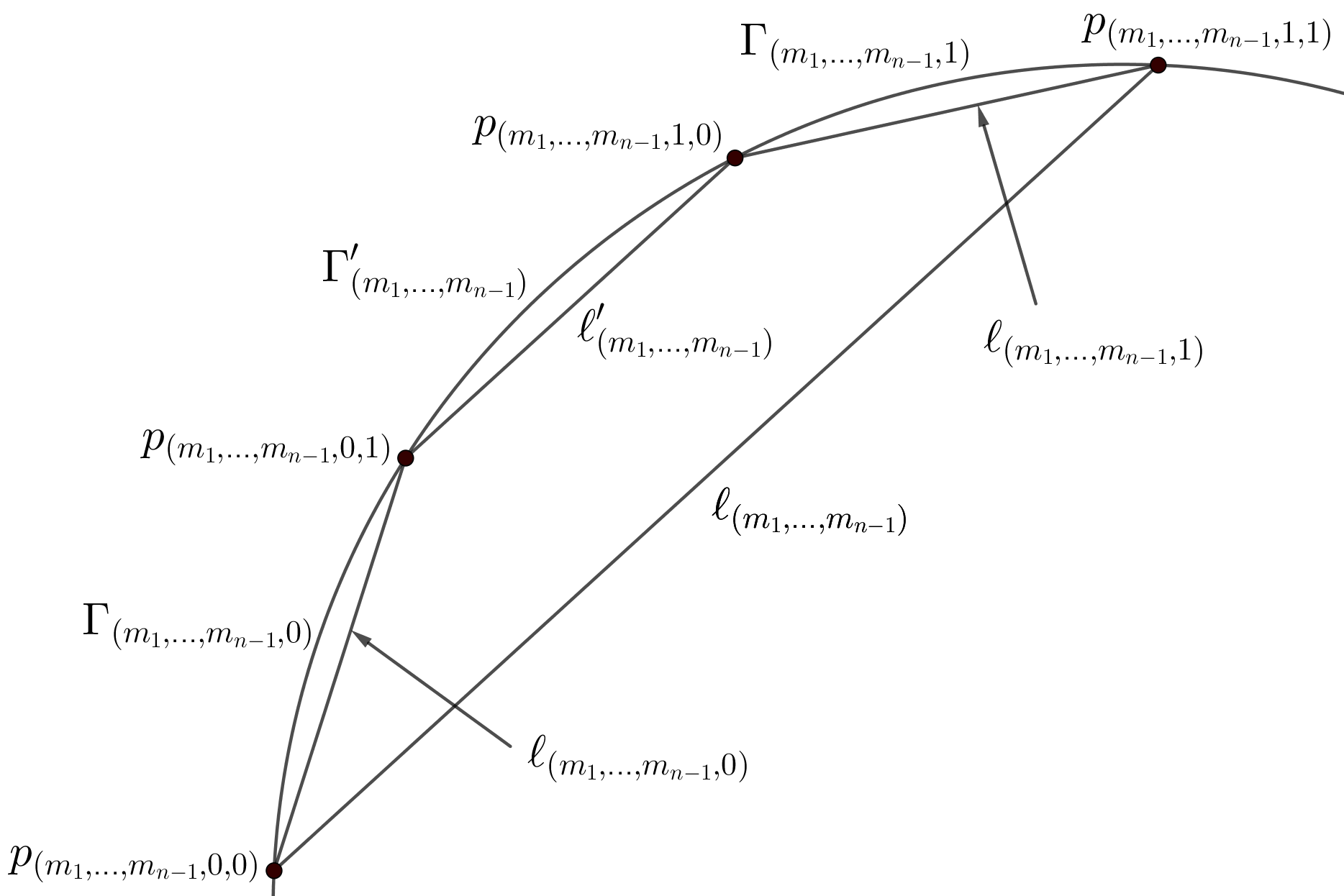}
    \caption{Notation - part one}
    \label{fig:notation}
\end{figure}

\begin{figure}
    \centering
    \includegraphics[scale=0.4]{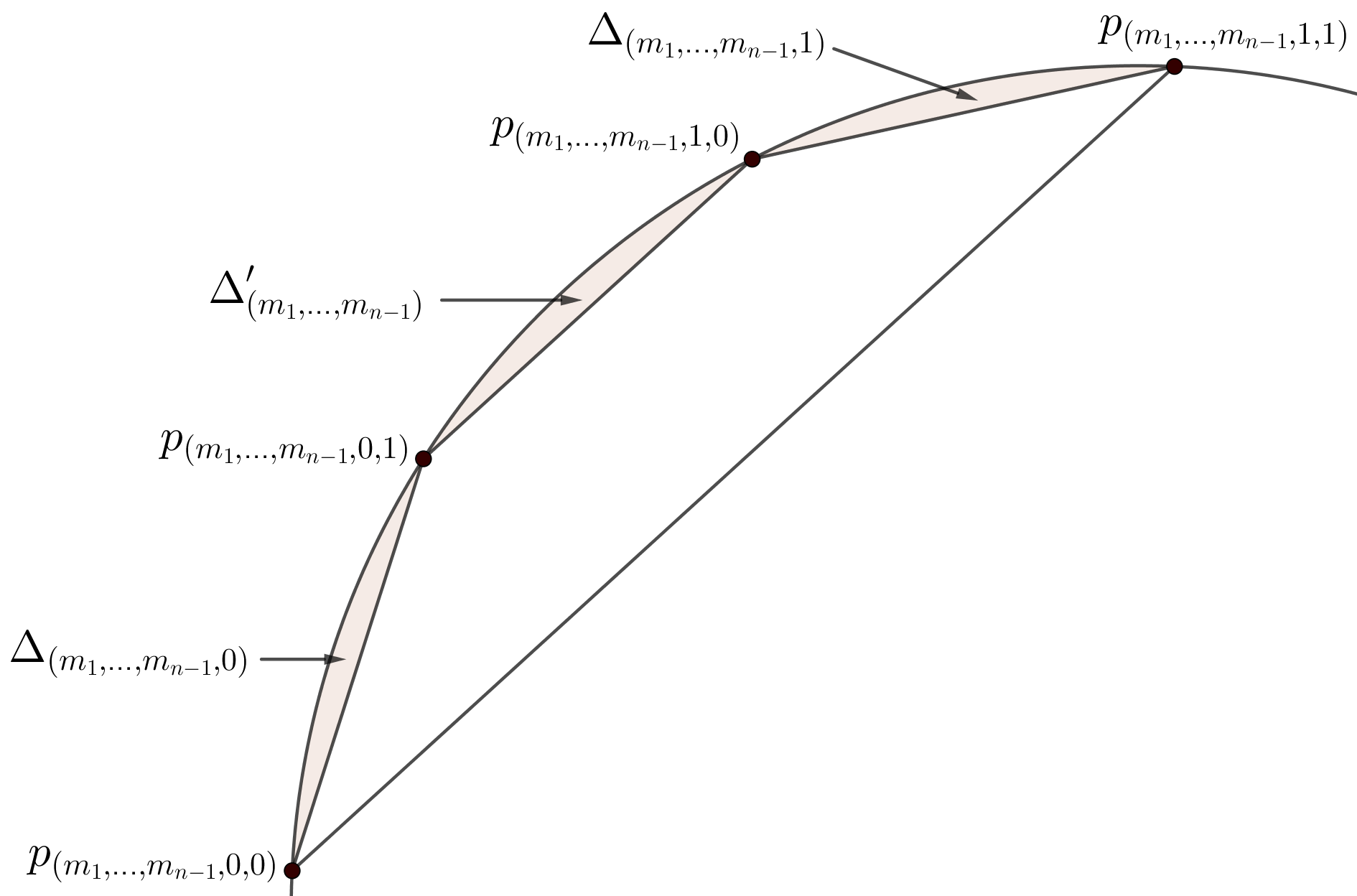}
    \caption{Notation - part two}
    \label{fig:notationpart2}
\end{figure}

Finally, to simplify the notation, for a line segment $\ell = \overline{xy} \subset \overline{\Omega}$ we denote by $\| \ell \|_\phi$ its anisotropic length, i.e.
\begin{equation}
\| \ell \|_\phi = \phi(y-x).
\end{equation}
We will typically apply this to the line segments $\ell_{(m_1,...,m_n)}$ or $\ell'_{(m_1,...,m_{n-1})}$.

The core of our strategy will be to consider trapezoids, which arise during the $n$-th step of the construction of the Cantor set, and check which configurations are optimal in such a trapezoid. To be exact, consider the trapezoid whose vertices are points $p_{(m_1,...,m_{n-1},0,0)}$, $p_{(m_1,...,m_{n-1},0,1)}$, $p_{(m_1,...,m_{n-1},1,0)}$ and $p_{(m_1,...,m_{n-1},1,1)}$. Its sides are the line segments $\ell_{(m_1,...,m_{n-1})}$, $\ell_{(m_1,...,m_{n-1},0)}$, $\ell_{(m_1,...,m_{n-1},1)}$ and $\ell'_{(m_1,...,m_{n-1})}$. Fix a strictly convex norm $\phi$ and denote
\begin{equation}\label{eq:definitionofh}
\begin{split}
&h_\phi(m_1,...,m_{n-1}) := \\
&:= (\| \ell_{(m_1,...,m_{n-1})} \|_\phi + \| \ell'_{(m_1,...,m_{n-1})} \|_\phi) - (\| \ell_{(m_1,...,m_{n-1},0)} \|_\phi + \| \ell_{(m_1,...,m_{n-1},1)} \|_\phi).
\end{split}
\end{equation}
Note that for fixed $(m_1,...,m_{n-1}) \in \{ 0, 1 \}^{n-1}$ the value of $h_\phi(m_1,...,m_{n-1})$ is fully determined by $\alpha_{(m_1,...,m_{n-1})}$ and $\alpha_{(m_1,...,m_n)}$. The construction as described above is quite flexible, and in the results below we will often require that one of the two conditions hold: either 
\begin{equation}\label{eq:maininequality}
\text{For all } n \in \mathbb{N} \text{ and } (m_1,...,m_{n-1}) \in \{ 0 , 1 \}^{(n-1)} \text{ we have }
h_\phi(m_1,...,m_{n-1}) > 0;
\end{equation}
or
\begin{equation}\label{eq:mainequality}
\text{For all } n \in \mathbb{N} \text{ and } (m_1,...,m_{n-1}) \in \{ 0 , 1 \}^{(n-1)} \text{ we have }
h_\phi(m_1,...,m_{n-1}) = 0.
\end{equation}
Clearly, both options are possible within the framework of the construction: if the angles $\alpha_{(m_1,...,m_n)}$ are sufficiently small, the sides of the trapezoid are shorter than the bases, so \eqref{eq:maininequality} holds. On the other hand, since $\phi$ is strictly convex, if the angles $\alpha_{(m_1,...,m_n)}$ are sufficiently close to $\frac{\alpha_{(m_1,...,m_{n-1})}}{2}$, by the triangle inequality we have $h_\phi(m_1,...,m_{n-1}) < 0$. By continuity of $h_\phi(m_1,...,m_{n-1})$, it is possible to choose it in such a way that \eqref{eq:mainequality} holds. From now on, unless specified otherwise, we consider only strictly convex norms.

\begin{remark}
In the case when $\phi$ is the Euclidean norm, the above construction can be simplified. For instance, we may use a sequence $\alpha_n$ in place of the family $\alpha_{(m_1,...,m_n)}$, and simply set $\alpha_{(m_1,...,m_n)} := \alpha_n$. Moreover, the function $h_{l_2}$ does not depend on the choice of $(m_1,...,m_{n-1})$, but it is only a function of $\alpha_{n-1}$ and $\alpha_n$, because the relative positions of the four points depend only on these two numbers.
\end{remark}

\subsection{Main results}\label{sec:proof}

Throughout the rest of the paper, we use the notation from the previous subsection.

\begin{proposition}\label{prop:existenceofEn}
Suppose that \eqref{eq:maininequality} holds. Then, for all $n \in \mathbb{N}$, there exists a unique solution $u_n \in BV(\Omega)$ to problem \eqref{eq:problem} with boundary data $f_n = \chi_{F_n}$. It is of the form $u_n = \chi_{E_n}$.
\end{proposition}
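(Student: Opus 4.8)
The plan is to reduce the statement, via Lemmas \ref{lem:characteristicfunctions} and \ref{lem:uniquesolution}, to the purely geometric problem of finding the unique $\phi$-perimeter minimiser among sets of finite perimeter whose trace equals $\chi_{F_n}$. By Theorem \ref{thm:bgg} and the discussion following it, since $\phi$ is a strictly convex norm in the plane, any $\phi$-least gradient competitor has boundary consisting of a locally finite union of line segments whose endpoints lie on $\partial\Omega$; moreover the trace condition $T\chi_E = \chi_{F_n}$ forces, for each component $\Gamma_{(m_1,\dots,m_n)}$ of $F_n$, that the two endpoints $p_{(m_1,\dots,m_n,0)}$ and $p_{(m_1,\dots,m_n,1)}$ are joined inside $\overline\Omega$ and that no segment endpoint lands in the interior of $F_n$ or in the complementary arcs. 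First I would make this structural reduction precise: a candidate solution is determined by a non-crossing chord configuration on the $2^{n+1}$ marked points compatible with the trace, and one checks that among all such configurations the $\phi$-length is minimised by a configuration built out of the chords $\ell_{(m_1,\dots,m_n)}$ and possibly some $\ell'$-type chords (since chords must be non-crossing and must separate the arcs correctly, the only chords that can appear connect consecutive marked points or skip over a "gap" arc).

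Next I would run the argument inductively on the nested trapezoids. Fix $(m_1,\dots,m_{n-1})$ and look at the trapezoid with vertices $p_{(m_1,\dots,m_{n-1},0,0)}$, $p_{(m_1,\dots,m_{n-1},0,1)}$, $p_{(m_1,\dots,m_{n-1},1,0)}$, $p_{(m_1,\dots,m_{n-1},1,1)}$. Inside this trapezoid the trace condition on $\chi_{F_n}$ leaves essentially two local options: either use the two short bases $\ell_{(m_1,\dots,m_{n-1},0)}$ and $\ell_{(m_1,\dots,m_{n-1},1)}$ (this is the local picture of $E_n$), or use the long base $\ell_{(m_1,\dots,m_{n-1})}$ together with the chord $\ell'_{(m_1,\dots,m_{n-1})}$ (the local picture of $E'_n$, or of a hybrid). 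The difference in $\phi$-length between the second and the first is exactly $h_\phi(m_1,\dots,m_{n-1})$, which is $>0$ by hypothesis \eqref{eq:maininequality}. Since every admissible chord configuration decomposes, trapezoid by trapezoid, into one of these local choices (crossing chords are excluded, and the $\phi$-length is the sum of the lengths of the chords used plus the fixed contribution of the arcs bounding $E_0$ from outside), strict positivity of every $h_\phi$ forces the global minimiser to take the first option in every trapezoid, i.e. to equal $E_n$, and uniqueness follows because any other admissible configuration is strictly longer by at least one positive $h_\phi$ term. Existence is then immediate: $\chi_{E_n} \in BV(\Omega)$ with the right trace realises the minimum, and Theorem \ref{thm:miranda} is not even needed at this finite stage. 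Uniqueness as a $BV$ function (not merely among characteristic functions) is exactly Lemma \ref{lem:uniquesolution}.

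The step I expect to be the main obstacle is the combinatorial claim that \emph{every} admissible $\phi$-least gradient competitor — a priori just some finite union of chords among all $2^{n+1}$ marked points plus possibly the endpoints $p_0,p_1$ and further segments not anchored at marked points — decomposes cleanly into the per-trapezoid dichotomy described above. One has to argue: (i) segments of the boundary of $E$ cannot have endpoints in the interior of $\partial\Omega \setminus \overline{F_n}$ nor in the relative interior of $F_n$, from the trace condition combined with the fact that near such a point the trace is locally constant (this uses the characterisation of traces of sets of finite perimeter and the one-sided structure of $BV$ functions in $2$D); (ii) the chords cannot cross, since two crossing minimal segments can always be "uncrossed" to strictly decrease $\phi$-length when $\phi$ is strictly convex, or the configuration already fails to be $\phi$-minimal; (iii) once chords are non-crossing and anchored only at the $2^{n+1}$ marked points plus $\{p_0,p_1\}$, a planar/topological argument (each $\Gamma_{(m_1,\dots,m_n)}$ must be "cut off" from the rest) pins down exactly the finitely many admissible configurations, and these are precisely the ones obtained by choosing, independently in each trapezoid, one of the two local options. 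I would handle (i)–(ii) by citing/adapting the standard facts about traces and about uncrossing minimal curves for strictly convex norms (as in \cite{ST,Gor2018CVPDE}), and (iii) by a direct induction on $n$ that mirrors the recursive construction of $F_n$; the bookkeeping that the total $\phi$-length is additive over trapezoids up to a fixed constant is routine but needs to be written carefully so that the sign of $h_\phi$ genuinely controls the comparison.
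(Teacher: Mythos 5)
Your reduction to characteristic functions (Lemmas \ref{lem:characteristicfunctions} and \ref{lem:uniquesolution}) and your final comparison --- two local options per trapezoid, differing in $\phi$-length by exactly $h_\phi(m_1,\dots,m_{n-1})>0$ --- are the same as in the paper. Where you diverge is in how you confine an arbitrary competitor to the trapezoids, and this is where your proposal has a genuine gap: the ``combinatorial claim'' you flag as the main obstacle (that every admissible competitor is a non-crossing chord configuration anchored only at the $2^{n+1}$ marked points, decomposing trapezoid by trapezoid into the two local options) is not established, and making it rigorous is substantially harder than you suggest. You would need to rule out chords ending at non-marked points or meeting in pairs on $\partial\Omega$, classify all hybrid configurations in which a level-$k$ chord $\ell_{(m_1,\dots,m_k)}$ with $k<n-1$ appears together with gap-cutting chords at several deeper levels, and show that the length comparison telescopes into a sum of $h_\phi$ terms over all intermediate trapezoids, not just the last generation. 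None of this is routine bookkeeping, and as written your argument does not close.

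The paper avoids the entire classification by an induction on $n$ combined with submodularity of the anisotropic perimeter. Assuming by induction that $\chi_{E_{n-1}}$ is the \emph{unique} solution for $f_{n-1}$, take any solution $\chi_E$ for $f_n$ (which exists by Theorem \ref{thm:existence} and Lemma \ref{lem:characteristicfunctions}). Since $T\chi_{E\cup E_{n-1}}=\max(T\chi_E,T\chi_{E_{n-1}})=\chi_{F_{n-1}}$ and $T\chi_{E\cap E_{n-1}}=\chi_{F_n}$, the inequality $P_\phi(E\cap E_{n-1},\Omega)+P_\phi(E\cup E_{n-1},\Omega)\le P_\phi(E,\Omega)+P_\phi(E_{n-1},\Omega)$ forces both $E\cup E_{n-1}$ and $E\cap E_{n-1}$ to be minimisers of their respective problems; uniqueness at level $n-1$ then gives $E\cup E_{n-1}=E_{n-1}$, i.e.\ $E\subset E_{n-1}$. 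Only now is the problem localised to the connected components $\Delta_{(m_1,\dots,m_{n-1})}$, where a single application of \eqref{eq:maininequality} identifies $E$ with $E_n$, and Lemma \ref{lem:uniquesolution} upgrades this to uniqueness in $BV(\Omega)$. If you want to salvage your route, you would have to carry out the uncrossing and classification arguments in the style of \cite{ST,Gor2018CVPDE} in full; otherwise the submodularity--induction mechanism is the missing idea that replaces your step (iii).
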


Geometrically, the inequality $h_\phi(m_1,...,m_{n-1}) > 0$ means that the sum of the lengths (weighted by $\phi$) of the bases of the trapezoid whose boundary consists of $\ell_{(m_1,...,m_{n-1})}$, $\ell_{(m_1,...,m_{n-1},0)}$, $\ell_{(m_1,...,m_{n-1},1)}$ and $\ell'_{(m_1,...,m_{n-1})}$ is larger than the sum of the lengths (weighted by $\phi$) of its sides. This is required at every step of the construction. 

\begin{proof}
Let $n = 0$. Clearly, the solution $u_0 \in BV(\Omega)$ to problem \eqref{eq:problem} with boundary data $f_0$ is unique and given by $u_0 = \chi_{E_0}$. Also, for $n = 1$, by inequality \eqref{eq:maininequality} the solution $u_1 \in BV(\Omega)$ to problem \eqref{eq:problem} with boundary data $f_1$ is unique and given by $u_1 = \chi_{E_1}$.

Now, take any $n \geq 2$. Suppose that the solution to problem \eqref{eq:problem} with boundary data $f_{n-1}$ is unique and given by $u_{n-1} = \chi_{E_{n-1}}$. By Theorem \ref{thm:existence}, there exists a solution to problem \eqref{eq:problem} with boundary data $f_n$. By Lemma \ref{lem:characteristicfunctions}, there also exists a solution of the form $u_n = \chi_{E}$, where $E \subset \Omega$ is a set of finite perimeter. 

Observe that the function $\chi_{E_{n-1} \cup E}$ is admissible in problem \eqref{eq:problem} with boundary data $f_{n-1}$, since $T \chi_{E_{n-1} \cup E} = T \max(\chi_{E_{n-1}},\chi_{E}) = \max(T_{\chi_{E_{n-1}}}, T_{\chi_{E}}) = \chi_{F_{n-1}}$. Similarly, $\chi_{E_{n-1} \cap E}$ is admissible in problem \eqref{eq:problem} with boundary data $f_{n}$, since $T \chi_{E_{n-1} \cap E} = T \min(\chi_{E_{n-1}},\chi_{E}) = \min(T_{\chi_{E_{n-1}}}, T_{\chi_{E}}) = \chi_{F_{n}}$. By an anisotropic version of \cite[Proposition 3.38]{AFP} (the result is given in the isotropic case, but the proof in the anisotropic case runs along the same lines) we have
$$ P_\phi(E \cap E_{n-1}, \Omega) + P_\phi(E \cup E_{n-1}, \Omega) \leq P_\phi(E,\Omega) + P_\phi(E_{n-1}, \Omega), $$
so both $E_{n-1} \cap E$ and $E_{n-1} \cup E$ are solutions to the respective problems. { Since $\chi_{E_{n-1}}$ is the unique solution to \eqref{eq:problem} with boundary data $f_{n-1}$}, we have that $E_{n-1} = E_{n-1} \cup E$ { up to a set of measure zero}. Therefore, $E \subset E_{n-1}$ { up to a set of measure zero}, so we only need to check how the set $E$ looks like in each of the connected components of $E_{n-1}$, i.e. the sets $\Delta_{(m_1,...,m_{n-1})}$. But then, inequality \eqref{eq:maininequality} implies that $E = E_n$ { up to a set of measure zero}, and by Lemma \ref{lem:uniquesolution} every solution $u_n$ to problem \eqref{eq:problem} is equal to $\chi_{E_n}$.
\end{proof}

Now, we show that such inequality at every step of the construction implies that there is no solution for the anisotropic least gradient problem with boundary data $f =\chi_{F_\infty}$.

\begin{proposition}\label{prop:nonexistence}
Suppose that \eqref{eq:maininequality} holds. Then, provided that $\mathcal{H}^1(F_\infty) > 0$, there is no solution to problem \eqref{eq:problem} with boundary data $f =\chi_{F_\infty}$.
\end{proposition}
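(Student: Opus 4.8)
The plan is to argue by contradiction. Suppose $u \in BV(\Omega)$ is a solution to \eqref{eq:problem} with boundary data $f = \chi_{F_\infty}$. By Lemma \ref{lem:characteristicfunctions} we may assume $u = \chi_E$ for a set $E$ of finite $\phi$-perimeter. The key structural fact I would extract first is a comparison between $E$ and the approximating sets $E_n$ built in Section \ref{sec:notation}. Since $F_\infty \subset F_n$ for every $n$, the function $\chi_{E_n}$ — which by Proposition \ref{prop:existenceofEn} is the unique $\phi$-least gradient function with trace $\chi_{F_n}$ — should serve as a barrier: intersecting and unioning $E$ with $E_n$ and using the submodularity of the $\phi$-perimeter
$$P_\phi(E \cap E_n, \Omega) + P_\phi(E \cup E_n, \Omega) \le P_\phi(E, \Omega) + P_\phi(E_n, \Omega),$$
together with the trace computations $T\chi_{E \cap E_n} = \chi_{F_\infty}$ and $T\chi_{E \cup E_n} = \chi_{F_n}$, one deduces (exactly as in the proof of Proposition \ref{prop:existenceofEn}, using uniqueness of $E_n$) that $E \subset E_n$. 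As this holds for every $n$ and $\bigcap_{n=0}^\infty E_n = \emptyset$, we conclude $|E| = 0$, i.e. $u \equiv 0$.

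Next I would derive the contradiction from $u \equiv 0$ by showing that the zero function does not have trace $\chi_{F_\infty}$ when $\mathcal{H}^1(F_\infty) > 0$. This is where the positivity of the $\mathcal{H}^1$-measure of the Cantor set enters: the trace of the constant $0$ is the constant $0$ on $\partial\Omega$, whereas $\chi_{F_\infty}$ differs from $0$ on a set of positive $\mathcal{H}^1$-measure, so $T\chi_E = \chi_{F_\infty}$ is impossible. Hence no solution exists. An alternative (and perhaps cleaner) route to the same contradiction is to use Theorem \ref{thm:miranda}: the functions $\chi_{E_n}$ are $\phi$-least gradient, and one checks $\chi_{E_n} \to \chi_{E_\infty'}$-type limits; but since here $E = \emptyset$ is forced while the sets $E_n$ do \emph{not} shrink to a set with the correct trace, we again contradict $T\chi_E = \chi_{F_\infty}$.

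I would organize the write-up as: (i) reduce to $u = \chi_E$; (ii) prove the comparison $E \subset E_n$ for all $n$ by the sub/supersolution argument, citing the perimeter submodularity and the uniqueness from Proposition \ref{prop:existenceofEn}; (iii) conclude $E = \emptyset$ from $\bigcap_n E_n = \emptyset$; (iv) observe $T\chi_\emptyset = 0 \ne \chi_{F_\infty}$ in $L^1(\partial\Omega)$ because $\mathcal{H}^1(F_\infty) > 0$, giving the contradiction.

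The main obstacle I anticipate is step (ii): one must be careful that the trace identities for $E \cap E_n$ and $E \cup E_n$ genuinely hold — in particular that $T\chi_{E\cup E_n} = \max(T\chi_E, T\chi_{E_n}) = \max(\chi_{F_\infty}, \chi_{F_n}) = \chi_{F_n}$ and $T\chi_{E\cap E_n} = \min(\chi_{F_\infty}, \chi_{F_n}) = \chi_{F_\infty}$, which requires the compatibility of traces with lattice operations on $BV$ (valid since traces of $BV$ functions behave well under $\max$ and $\min$). One also needs that $E \cap E_n$ is a genuine competitor, i.e. has finite $\phi$-perimeter, which follows from the submodularity inequality itself. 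Once these bookkeeping points are settled, the contradiction is immediate, so the proof is short.
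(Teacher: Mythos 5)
Your proposal is correct and follows essentially the same argument as the paper's proof: reduce to $u=\chi_E$ via Lemma \ref{lem:characteristicfunctions}, use submodularity of $P_\phi$ together with the lattice behaviour of traces and the uniqueness from Proposition \ref{prop:existenceofEn} to get $E \subset E_n$ for every $n$, and then conclude from $\bigcap_{n} E_n = \emptyset$ that $|E|=0$, contradicting $T\chi_E = \chi_{F_\infty}$ since $\mathcal{H}^1(F_\infty)>0$. Your trace identity $T\chi_{E\cap E_n}=\chi_{F_\infty}$ (so that $E\cap E_n$ competes in the problem with datum $f_\infty$) is exactly the intended reading of the comparison step.
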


\begin{proof}
Suppose that there exists a solution $u \in BV(\Omega)$ to problem \eqref{eq:problem} with boundary data $f_\infty$. By Lemma \ref{lem:characteristicfunctions}, we can assume it is of the form $u = \chi_{E}$. Similarly to the previous proof, observe that the function $\chi_{E \cup E_n}$ is admissible in problem \eqref{eq:problem} with boundary data $f_n$, since $T \chi_{E \cup E_n} = T \max(\chi_{E},\chi_{E_n}) = \max(T_{\chi_{E}}, T_{\chi_{E_n}}) = \chi_{F_n}$. Similarly, $\chi_{E_{n} \cap E}$ is admissible in problem \eqref{eq:problem} with boundary data $f_{n}$, since $T \chi_{E_{n} \cap E} = T \min(\chi_{E_{n}},\chi_{E}) = \min(T_{\chi_{E_{n}}}, T_{\chi_{E}}) = \chi_{F_{n}}$. But, again using an anisotropic version of \cite[Proposition 3.38]{AFP}, we get
$$ P_\phi(E \cap E_n, \Omega) + P_\phi(E \cup E_n, \Omega) \leq P_\phi(E,\Omega) + P_\phi(E_n, \Omega), $$
so both $E_{n} \cap E$ and $E_{n} \cup E$ are solutions to the respective problems. { Since $\chi_{E_{n}}$ is the unique solution to \eqref{eq:problem} with boundary data $f_{n}$}, we have that $E_n = E \cup E_n$ { up to a set of measure zero}. Therefore, $E \subset E_n$ { up to a set of measure zero}. But then, since $\bigcap_{n=0}^\infty E_n = \emptyset$, the set $E$ has zero measure, so $\chi_E$ violates the trace condition, a contradiction.
\end{proof}

Now, we turn our interest to the situation when the condition \eqref{eq:mainequality} holds. Geometrically, the equation $h_\phi(m_1,...,m_{n-1}) = 0$ means that the sum of the lengths (weighted by $\phi$) of the bases of the trapezoid whose boundary consists of $\ell_{(m_1,...,m_{n-1})}$, $\ell_{(m_1,...,m_{n-1},0)}$, $\ell_{(m_1,...,m_{n-1},1)}$ and $\ell'_{(m_1,...,m_{n-1})}$ is equal to the sum of the lengths (weighted by $\phi$) of its sides. This is assumed at every step of the construction. We show that in this case, regardless of the choice of the norm $\phi$, the resulting set $F_\infty$ has positive $\mathcal{H}^1$-measure.

\begin{proposition}\label{prop:positivemeasure}
Suppose that \eqref{eq:mainequality} holds. Then, we have $\mathcal{H}^{1}(F_\infty) > 0$.
\end{proposition}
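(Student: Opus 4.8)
The plan is to extract from \eqref{eq:mainequality} the single quantitative fact that the total $\phi$-length of the chords at level $n$ never decreases as $n$ grows, and then to read off a uniform lower bound for $\mathcal{H}^1(F_n)$. For $n\ge 1$ set
\[
A_n := \sum_{w\in\{0,1\}^n}\|\ell_w\|_\phi,
\]
and let $A_0 := \|\overline{p_0p_1}\|_\phi = \phi(p_1-p_0)$ (for $n=1$ this is the empty-index instance of \eqref{eq:definitionofh}, with $\ell_\emptyset:=\overline{p_0p_1}$ and $\ell'_\emptyset:=\overline{p_{(0,1)}p_{(1,0)}}$, the latter being the boundary segment of the set $\Delta'$ introduced in Section~\ref{sec:notation}). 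First I would record the algebraic content of the hypothesis: for every $n\ge1$ and every $w\in\{0,1\}^{n-1}$, the equality $h_\phi(w)=0$ is exactly
\[
\|\ell_{(w,0)}\|_\phi+\|\ell_{(w,1)}\|_\phi=\|\ell_w\|_\phi+\|\ell'_w\|_\phi,
\]
i.e. in the trapezoid attached to $w$ the two legs (which are the level-$n$ chords $\ell_{(w,0)},\ell_{(w,1)}$ of the two children of $w$) have the same total $\phi$-length as the two parallel bases $\ell_w$ and $\ell'_w$. Summing this identity over all $w\in\{0,1\}^{n-1}$, and using that $w\mapsto(w,0),(w,1)$ runs through $\{0,1\}^n$ exactly once, gives
\[
A_n = A_{n-1}+\sum_{w\in\{0,1\}^{n-1}}\|\ell'_w\|_\phi \ \ge\ A_{n-1}.
\]
Hence $(A_n)_{n\ge0}$ is non-decreasing, so $A_n\ge A_0=\phi(p_1-p_0)\ge 2\lambda\sin\frac{\alpha_0}{2}>0$ for every $n$, where $\lambda>0$ is the lower constant in the equivalence of $\phi$ with the Euclidean norm recalled in Section~\ref{sec:preliminaries}.

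Next I would turn this into a bound on $\mathcal{H}^1(F_n)$. Each $\Gamma_w$ (with $w\in\{0,1\}^n$) is an arc of $\partial B(0,1)$ subtending the angle $\alpha_w$, and $\ell_w$ is its chord, so its Euclidean length satisfies $|\ell_w|=2\sin\frac{\alpha_w}{2}\le\alpha_w=\mathcal{H}^1(\Gamma_w)$ (a chord is shorter than the arc it subtends). Using the upper equivalence constant $\Lambda$ and the fact that the arcs $\{\Gamma_w\}_{w\in\{0,1\}^n}$ cover $F_n$ and overlap only in a finite set,
\[
A_n=\sum_{w\in\{0,1\}^n}\|\ell_w\|_\phi \ \le\ \Lambda\sum_{w\in\{0,1\}^n}|\ell_w| \ \le\ \Lambda\sum_{w\in\{0,1\}^n}\mathcal{H}^1(\Gamma_w)=\Lambda\,\mathcal{H}^1(F_n).
\]
Combining the two estimates, $\mathcal{H}^1(F_n)\ge A_n/\Lambda\ge \tfrac{2\lambda}{\Lambda}\sin\frac{\alpha_0}{2}$ for every $n$. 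Since the $F_n$ form a decreasing sequence of compact subsets of $\partial\Omega$ with $\mathcal{H}^1(F_0)=\alpha_0<\infty$ and $F_\infty=\bigcap_n F_n$, continuity from above of $\mathcal{H}^1$ (valid because $\mathcal{H}^1(F_0)<\infty$) yields
\[
\mathcal{H}^1(F_\infty)=\lim_{n\to\infty}\mathcal{H}^1(F_n)\ \ge\ \frac{2\lambda}{\Lambda}\sin\frac{\alpha_0}{2}\ >\ 0.
\]

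I do not expect a genuine obstacle here: the only points that need care are the bookkeeping in the summation step (keeping track that, among the four sides of the trapezoid rooted at $w$, the bases are $\ell_w,\ell'_w$ while the legs $\ell_{(w,0)},\ell_{(w,1)}$ are precisely the level-$n$ chords of the children, so that the sum telescopes into $A_n$ and $A_{n-1}$), and the degenerate initial step $n=1$, where the parent data are $\ell_\emptyset=\overline{p_0p_1}$ and $\ell'_\emptyset=\overline{p_{(0,1)}p_{(1,0)}}$ — a matter of notation only. It is worth stressing that this monotonicity argument sidesteps the more delicate-looking route of estimating the angular loss $\alpha_w-\alpha_{(w,0)}-\alpha_{(w,1)}$ at each node and summing it over the tree, and that it uses only \eqref{eq:mainequality} together with the equivalence of $\phi$ with the Euclidean norm: strict convexity and $C^1$-regularity of $\phi$, although needed to guarantee that \eqref{eq:mainequality} can be arranged in the first place, play no role in this proof.
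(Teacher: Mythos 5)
Your argument is correct, and it is genuinely different from — and noticeably more elementary than — the paper's proof. The paper works node by node: it passes from \eqref{eq:mainequality} to the simplified relation \eqref{eq:simplifiedequality} by a comparison argument, uses local Lipschitz continuity of $\phi$ and small-angle expansions to bound the ratio $r_{(m_1,\dots,m_n)}$ of removed to remaining arc by a geometrically decaying quantity, and then controls $\mathcal{H}^1(F_n)$ from below via a convergent infinite product. You instead sum the identities $h_\phi(w)=0$ over all $w\in\{0,1\}^{n-1}$ and observe that they telescope into
\begin{equation*}
A_n=A_{n-1}+\sum_{w\in\{0,1\}^{n-1}}\|\ell'_w\|_\phi\ \ge\ A_{n-1},
\end{equation*}
so the total $\phi$-length of the level-$n$ chords never decreases; the chord-shorter-than-arc inequality together with the two-sided equivalence $\lambda|\xi|\le\phi(\xi)\le\Lambda|\xi|$ then converts $A_n\ge A_0>0$ into a uniform lower bound $\mathcal{H}^1(F_n)\ge A_0/\Lambda$, and continuity from above finishes the proof. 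Your bookkeeping is right (the legs of the trapezoid at node $w$ are exactly the level-$n$ chords of its two children, so the sum over $w$ produces $A_n$ on one side and $A_{n-1}$ plus the nonnegative contribution of the $\ell'_w$ on the other), and the only notational wrinkle is the $n=1$ instance of \eqref{eq:mainequality}, which you handle; even if one reads \eqref{eq:mainequality} as starting at $n=2$, the argument goes through with $A_1$ in place of $A_0$. What your route buys is the elimination of the Lipschitz constant, the asymptotic expansions, the infinite product, and the slightly informal reduction to \eqref{eq:simplifiedequality}; what it gives up is the quantitative per-level information on the angles $\alpha_{(m_1,\dots,m_n)}$ and the ratios $r_{(m_1,\dots,m_n)}$ that the paper's computation produces (and which the subsequent remark sharpens in the Euclidean case), but none of that extra information is needed for the proposition as stated.
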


\begin{proof}
Fix $(m_1,...,m_{n-1}) \in \{ 0, 1 \}^{n-1}$. Recall that in order to compute $h_\phi(m_1,...,m_{n-1})$, we consider the trapezoid with sides $l_{(m_1,...,m_{n-1})}$, $l_{(m_1,...,m_{n-1},0)}$, $l_{(m_1,...,m_{n-1},1)}$ and $l'_{(m_1,...,m_{n-1})}$. Denote by $\nu$ the direction of the (parallel) line segments $l_{(m_1,...,m_{n-1})}$ and $l'_{(m_1,...,m_{n-1})}$, by $\nu_0$ the direction of the line segment $l_{(m_1,...,m_{n-1},0)}$ and by $\nu_1$ the direction of the line segment $l_{(m_1,...,m_{n-1},1)}$. Notice that for a circle of radius one the Euclidean length of the chord corresponding to angle $\alpha$ equals $2 \sin(\frac{\alpha}{2})$. Since $\alpha_{(m_1,...,m_n)}$ is the same for $m_n = 0,1$, equation \eqref{eq:mainequality} reduces to
\begin{equation}
\begin{split}
\phi(\nu) \sin \bigg( \frac{\alpha_{(m_1,...,m_{n-1})}}{2} \bigg) + \phi(\nu) \sin \bigg( \frac{\alpha_{(m_1,...,m_{n-1})}}{2} -  \alpha_{(m_1,...,m_n)} \bigg) = \\
= (\phi(\nu_0) + \phi(\nu_1)) \sin \bigg( \frac{\alpha_{(m_1,...,m_n)}}{2} \bigg).
\end{split}
\end{equation}
Let us omit the second summand and instead consider a configuration for which holds the following equality
\begin{equation}\label{eq:simplifiedequality}
\phi(\nu) \sin \bigg( \frac{\alpha_{(m_1,...,m_{n-1})}}{2} \bigg) = (\phi(\nu_0) + \phi(\nu_1)) \sin \bigg( \frac{\alpha_{(m_1,...,m_{n-1},m_n)}}{2} \bigg).
\end{equation}
If we perform the construction so that condition \eqref{eq:simplifiedequality} is satisfied, we obtain a set with lower or equal measure in the limit (because at every step we remove a larger portion of the boundary). We denote the resulting sets $F'_n$ and $F'_\infty$. Therefore, if we show that under condition \eqref{eq:simplifiedequality} the $\mathcal{H}^1$-measure of $F'_\infty$ is positive, then also the $\mathcal{H}^1$-measure of $F_\infty$ is positive.

Notice that the angle of incidence of the line segment $l_{(m_1,...,m_{n-1})}$ to $\partial\Omega$ equals $\frac{\alpha_{(m_1,...,m_{n-1})}}{2}$. Therefore, for $i = 0,1$, the angle between the direction $\nu$ and $\nu_i$ is smaller or equal to $\frac{\alpha_{(m_1,...,m_{n-1})}}{2}$. Since a norm is locally Lipschitz, we have 
\begin{equation}
|\phi(\nu_i) - \phi(\nu)| \leq L \frac{\alpha_{(m_1,...,m_{n-1})}}{2}.
\end{equation}
We apply this to equation \eqref{eq:simplifiedequality} and get
\begin{equation}\label{eq:simplifiedequality2}
\begin{split}
\phi(\nu) \bigg| \sin \bigg( \frac{\alpha_{(m_1,...,m_{n-1})}}{2} \bigg) - 2 \sin \bigg( \frac{\alpha_{(m_1,...,m_n)}}{2} \bigg) \bigg| \leq  L \sin \bigg( \frac{\alpha_{(m_1,...,m_n})}{2} \bigg) \alpha_{(m_1,...,m_{n-1})}.
\end{split}
\end{equation}
We divide both sides by $\phi(\nu)$ and estimate the right hand side using the inequality $\sin(\alpha) \leq \alpha$. Since $\phi$ is bounded from below on $\mathbb{S}^1$, we get
\begin{equation}\label{eq:simplifiedequality3}
\begin{split}
\bigg| \sin \bigg( \frac{\alpha_{(m_1,...,m_{n-1})}}{2} \bigg) - 2 \sin \bigg( \frac{\alpha_{(m_1,...,m_n)}}{2} \bigg) \bigg| &\leq \frac{L}{2\phi(\nu)} \alpha_{(m_1,...,m_n)} \alpha_{(m_1,...,m_{n-1})}  \leq \\
&\leq { C_1(\phi)} \alpha_{(m_1,...,m_{n})} \alpha_{(m_1,...,m_{n-1})},
\end{split}
\end{equation}
where { $C_1(\phi)$} is a constant which depends only on the choice of $\phi$. We will infer from this an estimate on 
\begin{equation}
r_{(m_1,...,m_n)} := \frac{\alpha_{(m_1,...,m_{n-1})} - 2 \alpha_{(m_1,...,m_n)} }{2 \alpha_{(m_1,...,m_n)}} 
\end{equation}
in the following way. Since $\alpha \geq \sin(\alpha) \geq \alpha - \frac{\alpha^3}{3}$, we have
\begin{align*}
\frac{\alpha_{(m_1,...,m_{n-1})}}{2} - \alpha_{(m_1,...,m_n)} &\leq \sin \bigg( \frac{\alpha_{(m_1,...,m_{n-1})}}{2} \bigg) - 2 \sin \bigg( \frac{\alpha_{(m_1,...,m_n)}}{2} \bigg) + \frac{(\alpha_{(m_1,...,m_{n-1})})^3}{24} \\
&\leq \bigg| \sin \bigg( \frac{\alpha_{(m_1,...,m_{n-1})}}{2} \bigg) - 2 \sin \bigg( \frac{\alpha_{(m_1,...,m_n)}}{2} \bigg) \bigg| + \frac{(\alpha_{(m_1,...,m_{n-1})})^3}{24} \\
&\leq { C_1(\phi)} \alpha_{(m_1,...,m_{n})} \alpha_{(m_1,...,m_{n-1})} + \frac{(\alpha_{(m_1,...,m_{n-1})})^3}{24}.
\end{align*}
We divide this inequality by $\alpha_{(m_1,...,m_{n})}$ and get
\begin{equation}
r_{(m_1,...,m_n)} \leq { C_2(\phi)} \bigg( 1 + \frac{1}{24} \frac{(\alpha_{(m_1,...,m_{n-1})})^2}{\alpha_{(m_1,...,m_{n})}} \bigg) \alpha_{(m_1,...,m_{n-1})},
\end{equation}
{ where $C_2(\phi) = \max(C_1(\phi),1)$}. In order for \eqref{eq:simplifiedequality} to hold, for sufficiently small $\alpha_0$ we necessarily have that 
\begin{equation}\label{eq:boundfrombelowonquotient}
\frac{\alpha_{(m_1,...,m_{n})}}{\alpha_{(m_1,...,m_{n-1})}} \geq \frac14
\end{equation}
(and this quotient approaches $\frac12$ as $n \rightarrow \infty$). Therefore, the quotient $\frac{(\alpha_{(m_1,...,m_{n-1})})^2}{\alpha_{(m_1,...,m_{n})}}$ is bounded, and we get
\begin{equation}\label{eq:simplifiedequality4}
r_{(m_1,...,m_n)}  \leq { C_3(\phi)} \alpha_{(m_1,...,m_{n-1})}
\end{equation}
with a larger constant { $C_3(\phi)$}. Recall that in the construction at every step the angle $\alpha_{(m_1,...,m_n)}$ decreases by a factor of at least two. Without loss of generality, we may assume that $\alpha_0$ is small enough so that \eqref{eq:boundfrombelowonquotient} holds and that { $\alpha_0 < \frac{1}{2C_3(\phi)}$} (otherwise, we start the computation at a sufficiently large step of the construction). Then,
\begin{equation}\label{eq:boundforrn}
r_{(m_1,...,m_n)} \leq { C_3(\phi)} \alpha_{(m_1,...,m_{n-1})} < { C_3(\phi)} \frac{1}{2^{n-1}} \alpha_0 < \frac{1}{2^{n}}.
\end{equation}
Now, we compute the measure of $F'_\infty$. We have
\begin{align}
\mathcal{H}^1(F'_n) = \sum_{(m_1,...,m_n) \in \{ 0, 1 \}^n} \alpha_{(m_1,...,m_n)} &= \sum_{(m_1,...,m_{n-1}) \in \{ 0, 1 \}^{n-1}} \frac{\alpha_{(m_1,...,m_{n-1})}}{1 + r_{(m_1,...,m_{n-1})}} \\
&\geq \frac{1}{1 + \frac{1}{2^{n}}} \sum_{(m_1,...,m_{n-1}) \in \{ 0, 1 \}^{n-1}} \alpha_{(m_1,...,m_{n-1})}.
\end{align}
We now repeat the same argument $n-1$ times and obtain
\begin{equation}
\mathcal{H}^1(F'_n) \geq \frac{1}{1 + \frac{1}{2^{n}}} \sum_{(m_1,...,m_{n-1}) \in \{ 0, 1 \}^{n-1}} \alpha_{(m_1,...,m_{n-1})} \geq \frac{\alpha_0}{\Pi_{k=1}^n (1 + \frac{1}{2^k})}.
\end{equation}
The sum $\sum_{k=1}^\infty \frac{1}{2^k}$ is convergent, so the infinite product $\Pi_{k=1}^\infty (1 + \frac{1}{2^k})$ is also convergent, and we get
\begin{equation}
\mathcal{H}^1(F'_\infty) = \lim_{n \rightarrow \infty} \mathcal{H}^1(F_n) = \frac{\alpha_0}{\Pi_{k=1}^\infty (1 + \frac{1}{2^k})} > 0.
\end{equation}
Since $\mathcal{H}^1(F_\infty) \geq \mathcal{H}^1(F'_\infty)$, we get the desired result.
\end{proof}

\begin{remark}
In the case when $\phi$ is the Euclidean norm, the above proof is simpler, and by a simple modification we may obtain a sharper bound on the $\mathcal{H}^1$-measure of $F_\infty$. To this end, notice that for a circle of radius one the length of the chord corresponding to angle $\alpha$ equals $2 \sin(\frac{\alpha}{2})$. Therefore, equality \eqref{eq:mainequality} becomes
\begin{equation}\label{eq:isotropicdetailed}
2 \sin \bigg( \frac{\alpha_n}{2} \bigg) = \sin \bigg( \frac{\alpha_{n-1}}{2} \bigg) + \sin \bigg( \frac{\alpha_{n-1}}{2} - \alpha_n \bigg).
\end{equation}
Denote $r_n = \frac{\alpha_{n-1} - 2 \alpha_n}{\alpha_{n}}$, i.e. $r_n$ is the ratio of the length of the removed arc $\Gamma'_{(m_1,...,m_{n-1})}$ to the length of the remaining arc $\Gamma_{(m_1,...,m_{n-1},m_n)}$. We rewrite equation \eqref{eq:isotropicdetailed} as
\begin{equation}\label{eq:isotropicdetailed2}
2 \sin \bigg( \frac{\alpha_n}{2} \bigg) = \sin \bigg( \frac{r_n \alpha_{n}}{2} \bigg) + \sin \bigg( \alpha_n + \frac{r_n \alpha_{n}}{2} \bigg)
\end{equation}
We use the estimates $\alpha \geq \sin(\alpha) \geq \alpha - \frac{\alpha^3}{3}$ and $1 \geq \cos(\alpha) \geq 1 - \frac{\alpha^2}{2}$ and get
\begin{equation}\label{eq:isotropicdetailed3}
\begin{split}
\alpha &\geq 2 \sin \bigg( \frac{\alpha_n}{2} \bigg) = \sin \bigg( \frac{r_n \alpha_{n}}{2} \bigg) + \sin(\alpha_n) \cos \bigg( \frac{r_n \alpha_{n}}{2} \bigg) + \cos(\alpha_n) \sin \bigg( \frac{r_n \alpha_{n}}{2} \bigg) \geq \\
&\geq \frac{r_n \alpha_n}{2} - \frac{r_n^3 \alpha_n^3}{24} + \bigg( \alpha_n - \frac{\alpha_n^3}{3} \bigg) \bigg( 1 - \frac{r_n^2 \alpha_n^2}{8} \bigg) + \bigg( 1 - \frac{\alpha_n^2}{2} \bigg) \bigg( \frac{r_n \alpha_n}{2} - \frac{r_n^3 \alpha_n^3}{24} \bigg) = \\
&= \alpha_n - \frac{\alpha_n^3}{3} + r_n \alpha_n + O(r_n^2 \alpha_n^2).
\end{split}
\end{equation}
We reorganise the above inequality and get
\begin{equation}\label{eq:isotropicdetailed4}
\frac{\alpha_n^3}{3} \geq r_n \alpha_n (1 + O(r_n \alpha_n)).
\end{equation}
Since the sequence $\alpha_n$ is decreasing and $r_n < 1$, for sufficiently small $\alpha_0$ we have that $1 + O(r_n \alpha_n) \geq \frac{1}{2}$ for all $n$, so
\begin{equation}\label{eq:isotropicdetailed5}
r_n \leq \frac{2}{3} \alpha_n^2 < \frac{2}{3 \cdot 4^n} \alpha_0^2.
\end{equation}
Now, we compute the measure of $F_\infty$. We have
\begin{equation}
\mathcal{H}^1(F_n) = 2^n \alpha_n = 2^n \frac{\alpha_{n-1}}{2 + r_n} = 2^n \frac{\alpha_0}{\Pi_{k=1}^n (2 + r_k)} = \frac{\alpha_0}{\Pi_{k=1}^n (1 + \frac{r_k}{2})} \geq \frac{\alpha_0}{\Pi_{k=1}^n (1 + \frac{\alpha_0^2}{3 \cdot 4^k})}.
\end{equation}
Since the infinite product $\Pi_{k=1}^\infty (1 + \frac{\alpha_0^2}{3 \cdot 4^k})$ is convergent, we get that
\begin{equation}
\mathcal{H}^1(F_\infty) = \lim_{n \rightarrow \infty} \mathcal{H}^1(F_n) = \frac{\alpha_0}{\Pi_{k=1}^\infty (1 + \frac{\alpha_0^2}{3 \cdot 4^k})} > 0.
\end{equation}
\end{remark}

Now, we show that under condition \eqref{eq:mainequality} it is no longer true that the solution $\chi_{E_n}$ is unique, but we may extract a solution whose form is suitable for later considerations.

\begin{proposition}\label{prop:nonuniquesolutions}
Suppose that \eqref{eq:mainequality} holds. Then, at every step of the construction, the functions $\chi_{E_n}$ and $\chi_{E'_n}$ are solutions to problem \eqref{eq:problem} with boundary data $f_n = \chi_{F_n}$.
\end{proposition}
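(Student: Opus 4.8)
The plan is to verify three things: that $\chi_{E_n}$ and $\chi_{E'_n}$ are both admissible for \eqref{eq:problem} with datum $f_n$; that they have the same anisotropic perimeter; and that this common value is the minimum. (Throughout, the hypothesis is read as the equality \eqref{eq:mainequality}, the case under discussion.) Admissibility is immediate from the construction, since $\partial E_n\cap\partial\Omega=F_n=\partial E'_n\cap\partial\Omega$, whence $T\chi_{E_n}=T\chi_{E'_n}=\chi_{F_n}$. For the equality of perimeters, the construction gives $P_\phi(E_n,\Omega)=\sum_{(m_1,\dots,m_n)}\|\ell_{(m_1,\dots,m_n)}\|_\phi$ and $P_\phi(E'_n,\Omega)=\|\ell_{()}\|_\phi+\sum_{k=0}^{n-1}\sum_{(m_1,\dots,m_k)}\|\ell'_{(m_1,\dots,m_k)}\|_\phi$, and summing the definition \eqref{eq:definitionofh} of $h_\phi$ over all trapezoids occurring at levels $1,\dots,n$ telescopes to
\[
\sum_{k=1}^{n}\ \sum_{(m_1,\dots,m_{k-1})\in\{0,1\}^{k-1}}h_\phi(m_1,\dots,m_{k-1})=P_\phi(E'_n,\Omega)-P_\phi(E_n,\Omega);
\]
under \eqref{eq:mainequality} the left-hand side vanishes.

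It then remains to identify this common value with the minimum, and since $\chi_{E'_n}$ is admissible it is enough to show that $\chi_{E_n}$ is a solution, i.e. that the minimal value in \eqref{eq:problem} with datum $f_n$ equals $P_\phi(E_n,\Omega)$. I would prove this by induction on $n$; the case $n=0$ is the obvious uniqueness of $E_0$. For the inductive step, assume the minimal value for $f_{n-1}$ is $P_\phi(E_{n-1},\Omega)$. By Theorem \ref{thm:existence} and Lemma \ref{lem:characteristicfunctions} there is a solution $\chi_E$ for $f_n$ with $E$ of finite perimeter. Exactly as in the proof of Proposition \ref{prop:existenceofEn}, $\chi_{E\cup E_{n-1}}$ is admissible for $f_{n-1}$ and $\chi_{E\cap E_{n-1}}$ is admissible for $f_n$; submodularity of the $\phi$-perimeter, optimality of $E$ for $f_n$, and $P_\phi(E\cup E_{n-1},\Omega)\geq P_\phi(E_{n-1},\Omega)$ (the inductive hypothesis applied to the admissible competitor $E\cup E_{n-1}$) force $P_\phi(E\cap E_{n-1},\Omega)=P_\phi(E,\Omega)$. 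Hence $\widetilde{E}:=E\cap E_{n-1}$ is again a solution for $f_n$ with $\widetilde{E}\subseteq E_{n-1}=\bigcup_{(m_1,\dots,m_{n-1})}\Delta_{(m_1,\dots,m_{n-1})}$. Unlike in Proposition \ref{prop:existenceofEn} one cannot conclude $E=\widetilde{E}$, as uniqueness now fails; but having produced one solution contained in $E_{n-1}$ is enough to compute the minimal value.

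The heart of the proof is then a local analysis inside each cap $\Delta_{(m_1,\dots,m_{n-1})}$. These caps are pairwise disjoint with disjoint closures, so $P_\phi(\widetilde{E},\Omega)=\sum_{(m_1,\dots,m_{n-1})}P_\phi\big(\widetilde{E},\overline{\Delta_{(m_1,\dots,m_{n-1})}}\cap\Omega\big)$, with nothing double counted on the chords. Since $\chi_{\widetilde{E}}$ is a function of $\phi$-least gradient, Theorem \ref{thm:bgg} and strict convexity of $\phi$ imply that within each cap $\partial\widetilde{E}$ is a finite union of line segments; a segment ending in the interior of the chord $\ell_{(m_1,\dots,m_{n-1})}$ would create an interior corner and could be shortened, so all the endpoints lie among the four points $p_{(m_1,\dots,m_{n-1},0,0)}$, $p_{(m_1,\dots,m_{n-1},0,1)}$, $p_{(m_1,\dots,m_{n-1},1,0)}$, $p_{(m_1,\dots,m_{n-1},1,1)}$, each of which (a simple jump of the trace) carries exactly one segment. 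There are then only three configurations, corresponding to the three perfect matchings of these four cyclically ordered points: the two pairs of opposite sides of the convex quadrilateral they span — which are precisely the restrictions of $E_n$ and of $E'_n$, of $\phi$-costs $\|\ell_{(m_1,\dots,m_{n-1},0)}\|_\phi+\|\ell_{(m_1,\dots,m_{n-1},1)}\|_\phi$ and $\|\ell_{(m_1,\dots,m_{n-1})}\|_\phi+\|\ell'_{(m_1,\dots,m_{n-1})}\|_\phi$ — and the pair of diagonals. Applying the triangle inequality for the norm $\phi$ at the crossing point of the diagonals shows that the sum of the $\phi$-lengths of the two diagonals strictly exceeds either sum of $\phi$-lengths of a pair of opposite sides, so the solution $\widetilde{E}$ cannot use the diagonal configuration in any cap. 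Thus in each cap $\widetilde{E}$ realizes one of the two remaining configurations, and by \eqref{eq:mainequality}, i.e. $h_\phi(m_1,\dots,m_{n-1})=0$, these two have equal $\phi$-cost, namely $\|\ell_{(m_1,\dots,m_{n-1},0)}\|_\phi+\|\ell_{(m_1,\dots,m_{n-1},1)}\|_\phi$. Summing over all $(m_1,\dots,m_{n-1})$ gives $P_\phi(\widetilde{E},\Omega)=P_\phi(E_n,\Omega)$, so the minimal value for $f_n$ equals $P_\phi(E_n,\Omega)=P_\phi(E'_n,\Omega)$, and both $\chi_{E_n}$ and $\chi_{E'_n}$ are solutions.

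The step I expect to be the main obstacle is the cap-by-cap analysis: carefully checking that a $\phi$-least gradient set lying in a single cap with the prescribed trace has one of exactly three elementary boundaries — no interior corners, no superfluous segments — and that the anisotropic perimeter is genuinely additive over the disjoint caps. Once those structural points are secured, the telescoping identity and the convex-quadrilateral inequality are routine.
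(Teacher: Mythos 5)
Your proposal is correct in substance, but it reaches the key conclusion by a genuinely different route than the paper. You rightly read the hypothesis as the equality condition \eqref{eq:mainequality} (the statement's reference to \eqref{eq:maininequality} is a slip), and your second half --- the telescoping of \eqref{eq:definitionofh} over all trapezoids to get $P_\phi(E_n,\Omega)=P_\phi(E'_n,\Omega)$ --- is exactly the paper's computation. The difference is in how one certifies that this common value is the minimum: the paper perturbs the angles so that the strict inequality \eqref{eq:maininequality} holds, invokes Proposition \ref{prop:existenceofEn} for the perturbed data, and passes to the limit using Miranda's stability result (Theorem \ref{thm:miranda}) to conclude that $\chi_{E_n}$ is of $\phi$-least gradient, then transfers optimality to $\chi_{E'_n}$ by the equality of total variations; you instead run an induction in which Theorem \ref{thm:existence}, Lemma \ref{lem:characteristicfunctions} and submodularity (with the inductive value of the minimum for $f_{n-1}$ replacing the uniqueness used in Proposition \ref{prop:existenceofEn}) produce a solution $\widetilde E\subset E_{n-1}$, and you then classify its boundary cap by cap: two chords per trapezoid, pairing the four marked points, with the diagonal matching excluded by the strict triangle inequality and the two remaining matchings of equal $\phi$-cost by \eqref{eq:mainequality}. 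Your route is more self-contained and yields slightly more (the exact minimal value and a cap-wise description of solutions lying inside $E_{n-1}$), whereas the paper's approximation argument is shorter precisely because it recycles the uniqueness statement and avoids redoing structure theory. The caveat is that your cap analysis leans on structural facts not stated explicitly in the paper --- that boundary components of a planar $\phi$-least gradient set (for strictly convex $\phi$) are full chords which can terminate only at jump points of the trace, with exactly one chord per simple jump point; these follow from standard corner-cutting and wedge-removal arguments (removing a wedge bounded by two chords meeting at a boundary point strictly decreases the perimeter without altering the trace in $L^1$), and indeed the paper's own proof of Proposition \ref{prop:existenceofEn} implicitly performs the same cap-by-cap reduction, so this is a gap of exposition rather than of substance; spelling those two facts out would make your argument complete.
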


\begin{proof}
Instead of making the construction for the sequence $\alpha_n$, let us first choose a sequence $\alpha'_n$ in a different way, so that inequality \eqref{eq:maininequality} holds. Then, by Proposition \ref{prop:existenceofEn}, there exists a unique solution to problem \eqref{eq:problem}, which is a characteristic function of a set which we denote by $E_n(\alpha'_n)$. Notice that for any finite step of the construction, we may approximate $\alpha_n$ with a sequence $\alpha_{n,k}$ with this property. We pass to the limit $k \rightarrow \infty$ and get that $\chi_{E_n(\alpha_{n,k})} \rightarrow \chi_{E_n}$ in $L^1(\Omega)$, where $E_n = E_n(\alpha_n)$. By Theorem \ref{thm:miranda}, $\chi_{E_n}$ is a function of $\phi$-least gradient. Since it satisfies the trace condition (it is easy to see, since the number of points which are endpoints of line segments in $\partial E_n$ is finite), it is a solution to \eqref{eq:problem}. 

Now, we prove that $\chi_{E'_n}$ is also a solution to problem \eqref{eq:problem}. To this end, we show that it has the same $\phi$-total variation as $\chi_{E_n}$ (it is clear that the trace is correct). We write
\begin{equation}
\int_\Omega |D\chi_{E_t}|_\phi = \sum_{(m_1,...,m_n) \in \{ 0,1 \}^n } \| \ell_{(m_1,...,m_n)} \|_\phi.
\end{equation}
By assumption \eqref{eq:mainequality}, the right hand side is equal to
\begin{equation}
\sum_{(m_1,...,m_{n-1}) \in \{ 0,1 \}^{n-1} } \| \ell_{(m_1,...,m_{n-1})} \|_\phi + \sum_{(m_1,...,m_{n-1}) \in \{ 0,1 \}^{n-1} } \| \ell'_{(m_1,...,m_{n-1})} \|_\phi.
\end{equation}
We argue in the same manner on the first summand, and after $n-1$ steps we obtain that
\begin{equation}
\begin{split}
\int_\Omega |D\chi_{E_t}|_\phi = \sum_{(m_1,...,m_{n-1}) \in \{ 0,1 \}^{n-1} } &\| \ell'_{(m_1,...,m_{n-1})} \|_\phi + ... + \sum_{m_1 \in \{ 0,1 \}} \| \ell'_{(m_1)} \|_\phi + \\
+ &\| \overline{p_{(0,0)} p_{(1,1)}} \|_\phi + \| \overline{p_{(0,1)} p_{(1,0)}} \|_\phi = \int_\Omega |D\chi_{E'_t}|_\phi,
\end{split}
\end{equation}
because the boundary of $E'_t$ consists exactly of the line segments in this sum. Hence, $\chi_{E'_t}$ has the same $\phi$-total variation as $\chi_{E_t}$, so it is also a solution to problem \eqref{eq:problem}.
\end{proof}

\begin{proposition}\label{prop:existencelimit}
Suppose that for all $(m_1,...,m_{n-1}) \in \{ 0,1 \}^{n-1}$ we have
\begin{equation}\label{eq:equalitycase2}
h_\phi(m_1,...,m_{n-1}) = 0.
\end{equation}
Then, there exists a solution to problem \eqref{eq:problem} with boundary data $f =\chi_{F_\infty}$.
\end{proposition}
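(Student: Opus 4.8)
The plan is to produce an explicit solution to \eqref{eq:problem} with boundary datum $f = \chi_{F_\infty}$ by passing to a limit of the sets $E'_n$ constructed in Section \ref{sec:notation}. By Proposition \ref{prop:positivemeasure}, condition \eqref{eq:mainequality} guarantees that $\mathcal{H}^1(F_\infty) > 0$, so the boundary datum is genuinely nontrivial; and by Proposition \ref{prop:nonuniquesolutions} (applied with \eqref{eq:mainequality} in force) each $\chi_{E'_n}$ is a $\phi$-least gradient function with trace $\chi_{F_n}$. Since $E'_n \subset E'_{n-1}$ and $E'_\infty = \bigcap_n E'_n$ has positive Lebesgue measure, we have $\chi_{E'_n} \to \chi_{E'_\infty}$ in $L^1(\Omega)$ by monotone (dominated) convergence. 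Theorem \ref{thm:miranda} then immediately gives that $\chi_{E'_\infty}$ is a function of $\phi$-least gradient, so the only thing left to check is the trace condition: $T\chi_{E'_\infty} = \chi_{F_\infty}$.

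The verification of the trace is the heart of the argument, and I expect it to be the main obstacle, since the trace operator is not continuous under $L^1$-convergence, so we cannot simply pass to the limit in $T\chi_{E'_n} = \chi_{F_n}$. Instead I would argue directly from the geometry. First, on $\partial\Omega \setminus F_\infty$: every point of $\partial\Omega \setminus F_0$ lies outside all $E'_n$, and for a point $x$ in some removed arc $\Gamma'_{(m_1,\dots,m_{k-1})}$ (respectively the initial removed arc from $p_{(0,1)}$ to $p_{(1,0)}$), once $n > k$ the set $E'_n$ near $x$ is bounded away from $\partial\Omega$ by the line segment $\ell'_{(m_1,\dots,m_{k-1})}$, hence $x$ has a boundary neighbourhood in which $E'_\infty$ has density zero, giving $T\chi_{E'_\infty}(x) = 0$. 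Since $\partial\Omega \setminus F_\infty = (\partial\Omega\setminus F_0) \cup \bigcup (\text{removed arcs})$ up to an $\mathcal{H}^1$-null set (the countable set of endpoints $p_{(m_1,\dots,m_n)}$ and the limit points of $F_\infty$ itself), this handles $\mathcal{H}^1$-a.e.\ point outside $F_\infty$. For $\mathcal{H}^1$-a.e.\ point $x \in F_\infty$: by a Lebesgue-density argument, a.e.\ such $x$ is a point of $\mathcal{H}^1$-density one of $F_\infty$ inside $\partial\Omega$, and for each $n$ the point $x$ lies on the arc $\Gamma_{(m_1,\dots,m_n)}$ for a unique address, whose associated set $\Delta_{(m_1,\dots,m_n)}$ is contained in $E'_n$; since the chord $\ell_{(m_1,\dots,m_n)}$ (whose distance to $\partial\Omega$ at $x$ tends to $0$ as the angles shrink) separates $\Delta_{(m_1,\dots,m_n)}$ from the rest, one shows that small boundary neighbourhoods of $x$ are asymptotically filled by $E'_\infty$, i.e.\ $E'_\infty$ has density one at $x$, so $T\chi_{E'_\infty}(x) = 1$.

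To make the density computations clean I would use the fact that $\partial\Omega$ is a circle, so each $\Gamma_{(m_1,\dots,m_n)}$ is an arc of angular width $\alpha_{(m_1,\dots,m_n)} \to 0$, and the corresponding $\Delta_{(m_1,\dots,m_n)}$ is a circular segment whose "height" (distance from the chord to the arc) is of order $\alpha_{(m_1,\dots,m_n)}^2$; comparing this with the angular scale shows that, at a density-one point of $F_\infty$, the portion of a small half-disk $B(x,\rho)\cap\Omega$ not covered by the relevant $\Delta_{(m_1,\dots,m_n)}$'s (equivalently, covered by the removed segments $\Delta'$) has measure $o(\rho^2)$. The only subtlety is to select, for a fixed small $\rho$, the right level $n = n(\rho)$ (so that $\Gamma_{(m_1,\dots,m_n)}$ has angular width comparable to $\rho$) and to control uniformly the contribution of all the removed pieces at levels $\le n$ that intersect $B(x,\rho)$; here one uses the geometric decay $r_{(m_1,\dots,m_n)} \le C(\phi)\,2^{-n}$ established in \eqref{eq:boundforrn} in the proof of Proposition \ref{prop:positivemeasure}, which bounds the total proportion of boundary (and, after multiplying by the height scale, of area) removed near $x$. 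Once the trace identity $T\chi_{E'_\infty} = \chi_{F_\infty}$ is established, $\chi_{E'_\infty}$ is admissible in \eqref{eq:problem} and is a $\phi$-least gradient function, hence a solution, which completes the proof.
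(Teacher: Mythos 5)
Your proposal is correct in its overall architecture and matches the paper up to the last step: same limit object $\chi_{E'_\infty}$, same use of Propositions \ref{prop:positivemeasure} and \ref{prop:nonuniquesolutions} (you rightly read the hypothesis of the latter as \eqref{eq:mainequality}), Theorem \ref{thm:miranda} for minimality of the limit, and the same localisation argument showing the trace vanishes $\mathcal{H}^1$-a.e.\ on $\partial\Omega\setminus F_\infty$. Where you genuinely diverge is the verification of the trace on $F_\infty$. You aim for full density one of $E'_\infty$ at $\mathcal{H}^1$-density-one points of $F_\infty$, via the sagitta estimate (height of a removed segment of angular width $\beta$ is of order $\beta^2$, area of order $\beta^3$) together with the Lebesgue density theorem; this can indeed be completed — at a density-one point the removed arcs meeting $B(x,r)$ eventually all have width at most $r$ and total width $o(r)$, so the removed area in $B(x,r)$ is $o(r^3)\ll r^2$ — so there is no gap, though note that \eqref{eq:boundforrn} was derived for the simplified configuration \eqref{eq:simplifiedequality} rather than for \eqref{eq:mainequality} itself (it transfers by the monotonicity remark in that proof), and in fact you do not need it: density one of $F_\infty$ at $x$ already suffices. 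The paper's route is softer and shorter: since every removed chord $\ell'_{(m_1,\dots,m_{n-1})}$ meets $\partial\Omega$ at angle $\frac{\alpha_{(m_1,\dots,m_{n-1})}}{2}<\frac{\pi}{4}$, a fixed cone of aperture $\frac{\pi}{2}$ at each point $x\in F_\infty$ lies locally in every $E'_n$, giving the uniform bound $\dashint_{B(x,r)\cap\Omega}\chi_{E'_\infty}\,dy\ge\frac12$ at \emph{every} point of $F_\infty$; combined with Lemma \ref{lem:superlevelsets}, which forces the trace of $\chi_{E'_\infty}$ to be a characteristic function, this upgrades ``trace not $0$'' to ``trace $=1$'' a.e.\ on $F_\infty$ without any density bookkeeping. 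Your version buys a stronger conclusion (density one, independent of the characteristic-function trick) at the cost of the quantitative estimates you flag; the paper's buys brevity and pointwise (not just a.e.) control.
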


\begin{proof}
It follows from Proposition \ref{prop:positivemeasure} that $\mathcal{H}^1(F_\infty) > 0$. By Proposition \ref{prop:nonuniquesolutions}, the function $\chi_{E'_n}$ is a solution to \eqref{eq:problem} with boundary data $\chi_{F_n}$. We pass to the limit; since the sequence { $E'_n$} is decreasing, we get $\chi_{E'_n} \rightarrow \chi_{E'_\infty}$ { in $L^1(\Omega)$}, where $E'_\infty$ has positive Lebesgue measure. By Theorem \ref{thm:miranda}, $\chi_{E'_\infty}$ is a function of $\phi$-least gradient. To finish the proof, we only need to show that the trace of $\chi_{E'_\infty}$ equals $\chi_{F_\infty}$.

Notice that by Lemma \ref{lem:superlevelsets} the trace of $\chi_{E'_\infty}$ is again a characteristic function of some set $F \subset \partial\Omega$. We need to show that $F = F_\infty$. To this end, we use the pointwise characterisation of the trace operator: recall that for $\mathcal{H}^1$-almost all $x \in \partial\Omega$ we have 
$$ \lim_{r \rightarrow 0} \, \dashint_{B(x,r) \cap \Omega} |\chi_{E'_\infty} (y) - \chi_F(x)| \, dy = 0. $$ 
Now, take $x \in \partial\Omega \setminus F_\infty$ with this property. Then, by the construction of the set $F_\infty$, its complement is a countable union of arcs $\Gamma'_{(m_1,...,m_{n-1})}$ for some $n \in \mathbb{N}$ and $(m_1,...,m_{n-1}) \in \{ 0, 1 \}^{n-1}$, plus the two arcs in $\partial\Omega \backslash F_1$. But the set $\Delta'_{(m_1,...,m_{n-1})}$ is disjoint with $E'_n$, so it is also disjoint with $E'_\infty$ (a similar property holds for $\partial\Omega \backslash F_1$). Therefore, for all $x \in \partial\Omega \setminus F_\infty$, there exists a neighbourhood of $x$ in $\Omega$ such that $\chi_{E'_\infty} = 0$, so $\chi_F(x) = 0$ for $\mathcal{H}^1$-almost all $x \in \partial\Omega \setminus F_\infty$. Hence, $F \subset F_\infty$ up to a set of measure zero.

On the other hand, take $x \in F_\infty$. Then, for all $n \in \mathbb{N}$ we also have $x \in F_n$. Notice that in every step of construction the angle between $\partial \Omega$ and $l'_{(m_1,...,m_{n-1})}$ equals $\frac{\alpha_{(m_1,...,m_{n-1})}}{2}$, so (for $n \geq 2$) for every point of $F_n$ there is a cone $C_x$ of size $\frac{\pi}{2}$, bounded by two line segments which intersect $\partial\Omega$ at $x$ at angle $\frac{\pi}{4}$, which locally lies entirely in $E'_n$. We may take exactly the same cone for all $n \in \mathbb{N}$, so for sufficiently small $r$ (depending on $x$) we have that $C_x \cap B(x,r) \subset { E'_\infty}$. Hence,
$$\dashint_{B(x,r) \cap \Omega} { |\chi_{E'_\infty}(y)|} \, dy \geq \frac{1}{2} \, \dashint_{B(x,r) \cap C_x} { |\chi_{E'_\infty}(y)|} \, dy = \frac{1}{2}.$$
Since the mean integral in the pointwise definition of the trace is bounded from below, the trace of { $\chi_{E'_\infty}$} cannot be equal to zero for $\mathcal{H}^1$-almost all $x \in F_\infty$, so we have $\chi_F(x) = 1$ for $\mathcal{H}^1$-almost all $x \in F_\infty$. Hence, $F_\infty \subset F$ up to a set of measure zero, and the proof is concluded.
\end{proof}

Finally, we prove the main result of the paper. By the $1$-homogeneity of norms $\phi_1$ and $\phi_2$, we may equivalently consider their restrictions to $\mathbb{S}^1$, and we understand the assumption that $\phi_i$ is of class $C^2$ as $\phi_i |_{\mathbb{S}^1} \in C^2(\mathbb{S}^1)$ (obviously, it cannot be differentiable at zero). In the proof below, we treat $\nu$ and its variants as angles (i.e. parameters on $\mathbb{S}^1$) instead of vectors in $\mathbb{R}^2$, and all derivatives appearing in the proof are tangential derivatives along $\mathbb{S}^1$.

{\it Proof of Theorem \ref{thm:tracespace}.}
Since $\phi_1 \neq c\phi_2$, possibly after rescaling { the norm $\phi_2$} we can find a direction $\nu_0$ such that $\phi_1(\nu_0) = \phi_2(\nu_0)$ and $\phi''_1(\nu_0) > \phi''_2(\nu_0)$. Note that rescaling of a norm does not change the area-minimising sets. Then, there exists a neighbourhood $N \subset \mathbb{S}^1$ of $\nu_0$ such that
\begin{equation}\label{eq:estimatesonderivatives}
\phi''_1(\nu) \geq \inf_N \phi''_1 > \sup_N \phi''_2 \geq \phi''_2(\nu)
\end{equation}
for all $\nu \in N$. We may assume that it is small enough so that the angular coordinate does not have a jump in $N$. Furthermore, for $i = 1,2$ denote by $\omega_i$ the modulus of continuity of $\phi''_i$. Possibly making the neighbourhood $N$ smaller, we may require that 
\begin{equation}\label{eq:estimatesonderivativespart2}
\omega_i(\mbox{diam}(N)) < { \frac{1}{16}} (\inf_N \phi''_1 - \sup_N \phi''_2)
\end{equation}
for $i = 1,2$. Here, $\mbox{diam}(N) = \sup_{\alpha,\beta \in N} |\alpha - \beta|$. { Finally, since $\phi_1(\nu_0) = \phi_2(\nu_0)$ and both functions are continuous, again possibly making the neighbourhood $N$ smaller we may require that the ratio $\frac{\phi_2}{\phi_1}$ is arbitrarily close to $1$ on $N$, i.e. given $\varepsilon > 0$ we have
\begin{equation}\label{eq:estimatesonnorms}
\bigg| \frac{\phi_2(\nu)}{\phi_1(\nu)} - 1 \bigg| < \varepsilon
\end{equation}
for all $\nu \in N$. 

In the course of the proof, we will further rescale the norm $\phi_2$ in order to have $\phi_1(\nu) = \phi_2(\nu)$ for some given point $\nu \in N$ other than $\nu_0$. Since before rescaling we have $\phi_1(\nu_0) = \phi_2(\nu_0)$, from property \eqref{eq:estimatesonnorms} it follows that in order to obtain $\phi_1(\nu) = \phi_2(\nu)$ after rescaling we need to multiply $\phi_2$ by a constant $r \in (1-\varepsilon,1+\varepsilon)$. Therefore, if we chose the constant $\varepsilon > 0$ in \eqref{eq:estimatesonnorms} small enough, after rescaling $\phi_2$ so that $\phi_1(\nu) = \phi_2(\nu)$ the property \eqref{eq:estimatesonderivatives} remains true, and an estimate similar to the one in \eqref{eq:estimatesonderivativespart2} still holds: we have
\begin{equation}\label{eq:rescaledestimatesonderivativespart2}
\omega_i(\mbox{diam}(N)) < { \frac{1}{8}} (\inf_N \phi''_1 - \sup_N \phi''_2)
\end{equation}
for $i = 1,2$. Note that we first choose $\varepsilon > 0$ and then the neighbourhood $N$, which allows for the estimate \eqref{eq:rescaledestimatesonderivativespart2} to be independent on the choice of $\nu$.}

We construct the set $F_\infty$ with respect to $\phi_1$ for $\alpha_0$, which is small enough that the direction $\nu_{(m_1,...,m_n)}$ of any of the line segments $\ell_{(m_1,...,m_n)}$ lies in $N$. We require additionally that condition \eqref{eq:mainequality} holds, i.e. for all $(m_1,...,m_{n-1}) \in \{ 0 , 1 \}^{n-1}$ we have
\begin{equation}
h_{\phi_1}(m_1,...,m_{n-1}) = 0.
\end{equation}
We will show that for all $(m_1,...,m_{n-1}) \in \{ 0 , 1 \}^{n-1}$ we have
\begin{equation}\label{eq:inequalityforhphi2}
h_{\phi_2}(m_1,...,m_{n-1}) > 0.
\end{equation}
This will be achieved by a comparison with the corresponding value for $h_{\phi_1}$. { To simplify the argument, we rescale $\phi_2$ again, this time requiring that $\phi_1(\nu_{(m_1,...,m_{n-1})}) = \phi_2(\nu_{(m_1,...,m_{n-1})})$. Note that rescaling $\phi_2$ does not change the validity of inequality \eqref{eq:inequalityforhphi2}. Furthermore, as discussed in the previous paragraph, this rescaling does not change the validity of property \eqref{eq:estimatesonderivatives} and we have the estimate \eqref{eq:rescaledestimatesonderivativespart2}.}

We will compare the two numbers $h_{\phi_1}(m_1,...,m_{n-1})$ and $h_{\phi_2}(m_1,...,m_{n-1})$ in the following way: we fix a sequence $(m_1,...,m_{n-1}) \in \{ 0, 1 \}^{n-1}$ and we set $\alpha: = \alpha_{(m_1,...,m_n)}$ as a free variable. In other words, we fix the positions of points $p_{(m_1,...,m_{n-1},0,0)}$ and $p_{(m_1,...,m_{n-1},1,1)}$, but we do not yet fix the position of points $p_{(m_1,...,m_{n-1},0,1)}$ and $p_{(m_1,...,m_{n-1},1,0)}$, which is prescribed by the parameter $\alpha$. Denote $\overline{\alpha} := \alpha_{(m_1,...,m_{n-1})}$ and set 
\begin{equation}
g(\alpha) = h_{\phi_1}(m_1,...,m_{n-1})(\alpha) - h_{\phi_2}(m_1,...,m_{n-1})(\alpha).
\end{equation}
We will prove that $g(\alpha) < 0$ for all $\alpha \in (0,\frac{\overline{\alpha}}{2})$; this means that whenever $h_{\phi_1}(m_1,...,m_{n-1}) = 0$, we have that $h_{\phi_2}(m_1,...,m_{n-1}) > 0$.


To simplify the notation, in the computation below denote $\overline{\nu} : = \nu_{(m_1,...,m_{n-1})}$ and $\nu_\alpha^i := \nu_{(m_1,...,m_{n-1},i)}$ for $i = 0,1$. Let us again stress that in this proof we treat $\nu_\alpha^i$ and $\overline{\nu}$ as angles and not as vectors in $\mathbb{R}^2$. In particular, we have $\nu_\alpha^0 - \overline{\nu} = \frac{\overline{\alpha}}{2} - \frac{\alpha}{2}$ and $\nu_\alpha^1 - \overline{\nu} = \frac{\alpha}{2} - \frac{\overline{\alpha}}{2}$. Therefore, in the notation introduced above $g(\alpha)$ takes the form
\begin{equation}
\begin{split}
g(\alpha) = \bigg[ &\phi_1(\overline{\nu}) \sin \frac{\overline{\alpha}}{2} + \phi_1(\overline{\nu}) \sin \bigg( \frac{\overline{\alpha}}{2} - \alpha \bigg) - \phi_1(\nu_\alpha^0) \sin \frac{\alpha}{2} - \phi_1(\nu_\alpha^1) \sin \frac{\alpha}{2} \\
- &\phi_2(\overline{\nu}) \sin \frac{\overline{\alpha}}{2} - \phi_2(\overline{\nu}) \sin \bigg( \frac{\overline{\alpha}}{2} - \, \alpha \bigg) + \phi_2(\nu_\alpha^0) \sin \frac{\alpha}{2} + \phi_2(\nu_\alpha^1) \sin \frac{\alpha}{2}  \bigg] \\
= &\sin \frac{\alpha}{2} \bigg[- \phi_1(\nu_\alpha^0) - \phi_1(\nu_\alpha^1) + \phi_2(\nu_\alpha^0) + \phi_2(\nu_\alpha^1)  \bigg].
\end{split}
\end{equation}
We will prove the estimate for $g$ using a Taylor expansion up to the second order for $\phi_i$ around $\overline{\nu}$. For $i = 1,2$, we have
\begin{align}\label{eq:estimateofh1part1}
\phi_i(\nu_\alpha^0) = \phi_i(\overline{\nu}) + \phi'_i(\overline{\nu}) \cdot (\nu_\alpha^0 - \overline{\nu}) + \frac12 \phi''_{i}(\overline{\nu}) \cdot (\nu_\alpha^0 - \overline{\nu})^2 + R_i^0,
\end{align}
where
\begin{equation}\label{eq:estimateforr0}
|R_i^0| \leq \omega_i(\nu_\alpha^0 - \overline{\nu}) \cdot (\nu_\alpha^0 - \overline{\nu})^2 \leq \omega_i(\mbox{diam}(N)) \cdot (\nu_\alpha^0 - \overline{\nu})^2.
\end{equation}
Similarly, we have
\begin{align}\label{eq:estimateofh1part2}
\phi_i(\nu_\alpha^1) = \phi_i(\overline{\nu}) + \phi'_i(\overline{\nu}) \cdot (\nu_\alpha^1 - \overline{\nu}) + \frac12 \phi''_{i}(\overline{\nu}) \cdot (\nu_\alpha^1 - \overline{\nu})^2 + R_i^1,
\end{align}
where
\begin{equation}\label{eq:estimateforr1}
|R_i^1| \leq \omega_i(\nu_\alpha^1 - \overline{\nu}) \cdot (\nu_\alpha^1 - \overline{\nu})^2 \leq \omega_i(\mbox{diam}(N)) \cdot (\nu_\alpha^0 - \overline{\nu})^2.
\end{equation}
Now, we sum up equations \eqref{eq:estimateofh1part1} and \eqref{eq:estimateofh1part2}. Using the fact that $\nu_\alpha^0 - \overline{\nu} = \frac{\overline{\alpha}}{2} - \frac{\alpha}{2}$ and $\nu_\alpha^1 - \overline{\nu} = \frac{\alpha}{2} - \frac{\overline{\alpha}}{2}$, we see that the first-order terms cancel out and we get
\begin{equation}\label{eq:estimateofh1part3}
\phi_i(\nu_\alpha^0) + \phi_i(\nu_\alpha^1) = 2\phi_i(\overline{\nu}) + \phi''_i(\overline{\nu}) \cdot \bigg(\frac{\overline{\alpha}}{2} - \frac{\alpha}{2} \bigg)^2 + R_i^0 + R_i^1. 
\end{equation}
Now, we substract the estimate \eqref{eq:estimateofh1part3} for $i = 1$ from the same estimate for $i = 2$. Since $\phi_1(\overline{\nu}) = \phi_2(\overline{\nu})$, the zero-order terms cancel out and we get
\begin{equation}\label{eq:estimateofh1part4}
\phi_2(\nu_\alpha^0) + \phi_2(\nu_\alpha^1) - \phi_1(\nu_\alpha^0) - \phi_1(\nu_\alpha^1) = (\phi''_2(\overline{\nu}) - \phi''_1(\overline{\nu})) \cdot \bigg(\frac{\overline{\alpha}}{2} - \frac{\alpha}{2} \bigg)^2 + R_2^0 + R_2^1 - R_1^0 - R_1^1.
\end{equation}
Now, inequalities \eqref{eq:estimateforr0} and \eqref{eq:estimateforr1} imply that
\begin{equation}
|R_2^0 + R_2^1 - R_1^0 - R_1^1| \leq 2 (\omega_1(\mbox{diam}(N)) + \omega_2(\mbox{diam}(N))) \cdot \bigg(\frac{\overline{\alpha}}{2} - \frac{\alpha}{2} \bigg)^2.
\end{equation}
Because we chose the neighbourhood $N$ so that it satisfies properties \eqref{eq:estimatesonderivatives} and \eqref{eq:estimatesonderivativespart2}, { by estimate \eqref{eq:rescaledestimatesonderivativespart2}} we get
\begin{equation}
|R_2^0 + R_2^1 - R_1^0 - R_1^1| \leq \frac{1}{2} { (\phi''_1(\overline{\nu}) - \phi''_2(\overline{\nu}))} \bigg(\frac{\overline{\alpha}}{2} - \frac{\alpha}{2} \bigg)^2.
\end{equation}
{ Therefore, $\phi_2(\nu_\alpha^0) + \phi_2(\nu_\alpha^1) - \phi_1(\nu_\alpha^0) - \phi_1(\nu_\alpha^1)$ is negative because $\phi''_1(\overline{\nu}) > \phi''_2(\overline{\nu})$.} Since $g(\alpha)$ is this number multiplied by $\sin \frac{\alpha}{2}$, which is positive for all $\alpha \in (0,\frac{\overline{\alpha}}{2})$, we get that $g(\alpha) < 0$. This shows that $h_{\phi_1}(m_1,...,m_{n-1}) = 0$ and $h_{\phi_2}(m_1,...,m_{n-1}) > 0$. Hence, for boundary data $f = \chi_{F_\infty}$, by Proposition \ref{prop:nonexistence} there is no solution to problem \eqref{eq:problem} for $\phi_2$, but by Proposition \ref{prop:existencelimit} there is a solution to problem \eqref{eq:problem} for $\phi_1$, which finishes the proof. \qed

In case when the norms are not of class $C^2$, we can prove a related weaker result: we show that for a given strictly convex norm $\phi_1$, we can find a norm $\phi_2$ arbitrarily close to $\phi_1$ in such a way that the trace spaces do not coincide.

\begin{proposition}
Suppose that $\phi_1$ is a strictly convex norm. Then, there exists a strictly convex norm $\phi_2$ and a function $f \in L^\infty(\partial\Omega)$ such that there exists a solution to \eqref{eq:problem} for $\phi_1$, but there is no solution to \eqref{eq:problem} for $\phi_2$. Moreover, we can require that $\phi_2$ be arbitrarily close to $\phi_1$ in the supremum norm on $\mathbb{S}^1$.
\end{proposition}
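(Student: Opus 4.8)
The plan is to mimic the proof of Theorem~\ref{thm:tracespace}, using the extra freedom that now \emph{we} choose $\phi_2$. First dispose of the case $\phi_1\in C^1$: here one perturbs the (strictly convex, smooth) unit ball of $\phi_1$ slightly and asymmetrically to a nearby smooth, uniformly convex body which is not a dilate of it, obtaining a strictly convex norm $\phi_2\in C^1$ with $\phi_2\neq c\phi_1$ and $\|\phi_2-\phi_1\|_{L^\infty(\mathbb{S}^1)}$ as small as desired; then Theorem~\ref{thm:tracespace} itself furnishes the required $f$. So from now on assume $\phi_1\notin C^1$. Writing $g=\phi_1|_{\mathbb{S}^1}$, convexity of $\phi_1$ forces every non-differentiability point of $\phi_1$ to be an upward corner of $g$: there is $\nu^\ast$ with $L^-:=g'^-(\nu^\ast)<g'^+(\nu^\ast)=:L^+$. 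Since $g'$ is nondecreasing up to a Lipschitz perturbation and the variation of $g'$ on $(\nu^\ast-r,\nu^\ast)$ tends to $0$ as $r\to0$, the function $g'$ stays within $o(1)$ of $L^-$ on a left neighbourhood of $\nu^\ast$; and since the corners of $g$ are countable, we may fix a closed arc $I\subset(\nu^\ast-r,\nu^\ast)$ (with $r$ small) containing no corner, on which $\phi_1$ is of class $C^1$ and $\phi_1'$ is as close as we like to a value $L_1\le L^-$.

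Next I would run the construction of Sections~\ref{sec:notation}--\ref{sec:proof} adapted to $\phi_1$: take $\alpha_0$ small and place $F_0$ so that all chord directions $\nu_{(m_1,\dots,m_n)}$ lie in $I$, and choose the angles so that $h_{\phi_1}(m_1,\dots,m_{n-1})=0$ at every step — possible by strict convexity and continuity, exactly as in the discussion following \eqref{eq:mainequality}. Then $F_\infty$ carries positive $\mathcal H^1$-measure by Proposition~\ref{prop:positivemeasure}, and by Proposition~\ref{prop:existencelimit} problem \eqref{eq:problem} for $\phi_1$ with datum $f=\chi_{F_\infty}$ has a solution. To build $\phi_2$, let it agree with $\phi_1$ outside a tiny window $(\nu^\ast-r_2,\nu^\ast+r_2)\supset I$ (with $\nu^\ast\pm r_2$ not corners), and inside the window be a smooth interpolation whose derivative is \emph{monotone nondecreasing}, going from $\phi_1'(\nu^\ast-r_2)\approx L^-$ to $\phi_1'(\nu^\ast+r_2)\approx L^+$, equal to a constant $L_2\in(L^-,\tfrac12(L^-+L^+)]$ on $I$, and with $\int$ across the window equal to $\phi_1(\nu^\ast+r_2)-\phi_1(\nu^\ast-r_2)$ (this last is automatically in the admissible range, which is centred at $(L^-+L^+)r_2$, so the matching and the constant piece can be inserted simultaneously). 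Because $\phi_2'$ is nondecreasing on the window, the unit ball of $\phi_2$ keeps positive curvature there, while outside $\phi_2=\phi_1$; hence $\phi_2$ is a strictly convex norm, of class $C^1$ on a neighbourhood of $I$, and $\|\phi_2-\phi_1\|_\infty\lesssim(L^+-L^-)r_2\to0$ as $r_2\to0$, so $\phi_2$ is as close to $\phi_1$ as we wish.

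Finally, after rescaling $\phi_2$ by a constant near $1$ (which does not alter the area-minimising sets) so that $\phi_1$ and $\phi_2$ coincide at one point of $I$, we are precisely in the situation analysed in the proof of Theorem~\ref{thm:tracespace}: on $I$ both norms are $C^1$, $\phi_1\approx\phi_2$, and $\phi_1'\le L_1<L_2=\phi_2'$. Rerunning that argument — differentiating $g(\alpha)=h_{\phi_1}(m_1,\dots,m_{n-1})(\alpha)-h_{\phi_2}(m_1,\dots,m_{n-1})(\alpha)$ in the free half-angle $\alpha$, noting $g(0)=0$ and that the derivative is negative — gives $h_{\phi_2}(m_1,\dots,m_{n-1})>0$ at every step, i.e. \eqref{eq:maininequality} holds for $\phi_2$; by Proposition~\ref{prop:nonexistence} there is then no solution to \eqref{eq:problem} for $\phi_2$ with datum $f$, which completes the proof. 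The hard part is this middle step: simultaneously localising the Cantor construction on a $C^1$ arc of $\phi_1$ adjacent to the corner, and producing a \emph{global} strictly convex norm $\phi_2$ that is close to $\phi_1$, rounds the corner, and is flat-sloped with slope strictly above $L^-$ on that arc. The monotonicity of $\phi_2'$ on the window is exactly what reconciles these demands with $\phi_2$ remaining a norm, and shrinking the window is what keeps $\phi_2$ near $\phi_1$; one must also check that the $C^1$-in-$\alpha$ computation from Theorem~\ref{thm:tracespace} only ever uses the regularity of $\phi_1,\phi_2$ along directions lying in $I$.
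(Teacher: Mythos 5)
Your overall strategy (build $F_\infty$ with $h_{\phi_1}=0$ at every step, then produce a nearby $\phi_2$ with $h_{\phi_2}>0$ and invoke Propositions~\ref{prop:positivemeasure}, \ref{prop:existencelimit} and \ref{prop:nonexistence}) is the right skeleton, but the non-$C^1$ branch has a genuine gap at its very first step. You claim: ``since the corners of $g$ are countable, we may fix a closed arc $I\subset(\nu^\ast-r,\nu^\ast)$ containing no corner, on which $\phi_1$ is of class $C^1$.'' Countability does not give this: the set of non-differentiability directions of a strictly convex norm can be countable \emph{and dense} in $\mathbb{S}^1$ (corners of $\phi_1$ correspond to boundary segments of the polar ball, and a smooth convex body --- which is exactly what strict convexity of the unit ball of $\phi_1$ forces the polar ball to be --- may carry countably many boundary segments whose normal directions are dense). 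For such a $\phi_1$ there is no corner-free arc at any scale, so you can neither localise the Cantor construction on a $C^1$ arc nor rerun the derivative computation of Theorem~\ref{thm:tracespace}, and the whole second half of your argument collapses. Everything downstream (the flat-sloped interpolation for $\phi_2$, the rescaling, the sign of $g'(\alpha)$) presupposes this arc. Smaller issues in the same branch (the ``$\approx$'' matching of slopes at the window junctions must be an inequality of one-sided derivatives to keep convexity, the modification must be made antipodally to keep $\phi_2$ even, and in the $C^1$ case ``perturb to a uniformly convex body'' needs a word, e.g.\ $\phi_2=\phi_1+\varepsilon\,l_4$ with $l_4\neq c\,\phi_1$) are fixable, but the missing corner-free arc is not, at least not without a substantially different argument avoiding any pointwise differentiability of $\phi_1$.

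For comparison, the paper's proof needs no smoothness of $\phi_1$ at all and no case distinction. It places the construction in the first quadrant with $h_{\phi_1}(m_1,\dots,m_{n-1})=0$ at every step, observes that $h_\phi$ as defined in \eqref{eq:definitionofh} is \emph{linear} in the norm $\phi$, and that $h_{l_1}(m_1,\dots,m_{n-1})>0$ for all such first-quadrant configurations (the $l_1$-length of the long base strictly exceeds the sum of the $l_1$-lengths of the two legs there). Taking $\phi_2=\phi_1+\tfrac1n l_1$, which is automatically a strictly convex norm and sup-close to $\phi_1$, gives $h_{\phi_2}=h_{\phi_1}+\tfrac1n h_{l_1}>0$, i.e.\ \eqref{eq:maininequality} for $\phi_2$, and the conclusion follows from Propositions~\ref{prop:existencelimit} and \ref{prop:nonexistence}. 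Exploiting this linearity is what lets one bypass entirely the derivative comparison (and hence any regularity of $\phi_1$) that your construction was trying to recreate near a corner.
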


\begin{proof}
Construct the set $F_\infty \subset \partial\Omega$ in the first quadrant of the coordinate plane, i.e. part of $\partial B(0,1)$ corresponding to $\alpha \in [0,\frac{\pi}{2}]$. We do this in such a way that
\begin{equation}
h_{\phi_1}(m_1,...,m_{n-1}) = 0 \text{ for all } (m_1,...,m_{n-1}) \in \{ 0,1 \}^{n-1}.
\end{equation}
In particular, by Proposition \ref{prop:positivemeasure} we have $\mathcal{H}^1(F_\infty) > 0$. Since $F_\infty$ lies in the first quadrant, for all $n \in \mathbb{N}$ all line segments between points in $F_n$ are area-minimising for the $l_1$ norm, so we may define $h_{l_1}(m_1,...,m_{n-1})$ by formula \eqref{eq:definitionofh} even though the norm is not strictly convex. We notice that since the $l_1$-length of $\ell_{(m_1,...,m_{n-1})}$ is greater than the sum of the $l_1$-lengths of $\ell_{(m_1,...,m_{n-1},0)}$ and $\ell_{(m_1,...,m_{n-1},1)}$, we have
\begin{equation}
h_{l_1}(m_1,...,m_{n-1}) > 0 \text{ for all } (m_1,...,m_{n-1}) \in \{ 0,1 \}^{n-1}.
\end{equation}
Finally, take $\phi_2 = \phi_1 + \frac{1}{n} l_1$. Then, it satisfies
\begin{equation}
h_{\phi_2}(m_1,...,m_{n-1}) = h_{\phi_1}(m_1,...,m_{n-1}) + \frac{1}{n} h_{l_1}(m_1,...,m_{n-1}) > 0
\end{equation}
for all $(m_1,...,m_{n-1}) \in \{ 0,1 \}^{n-1}$. Therefore, by Proposition \ref{prop:existencelimit}, there exists a solution to problem \eqref{eq:problem} with boundary data $f =\chi_{F_\infty}$ for $\phi_1$, but by Proposition \ref{prop:nonexistence} there is no solution for $\phi_2$, and the two norms can be arbitrarily close in the supremum norm on $\mathbb{S}^1$.
\end{proof}

Finally, let us briefly comment on the non-strictly convex case. The main difference with respect to the strictly convex case is that in the construction of the set $F_\infty$ in the neighbourhood of a given direction, due to the fact that the triangle inequality is not strict, it might be impossible to enforce the condition $h_\phi(m_1,...,m_{n-1}) = 0$ for all $\{ m_1, ..., m_{n-1} \} \in \{ 0, 1 \}^{n-1}$. Below, we give an example for the $l_1$ norm.

\begin{proposition}
Suppose that $\phi = l_1$. Suppose that $F_\infty \subset \partial\Omega$ lies in the first quadrant of the coordinate plane and $\mathcal{H}^1(F_\infty) > 0$. Then, there is no solution to problem \eqref{eq:problem} with boundary data $f =\chi_{F_\infty}$.
\end{proposition}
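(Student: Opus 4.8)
The plan is to determine the value of the infimum in \eqref{eq:problem} for $f = \chi_{F_\infty}$ exactly, and then to show it is not attained. First I would reduce, via Lemma \ref{lem:characteristicfunctions} (whose proof uses only Theorem \ref{thm:bgg} and Lemma \ref{lem:superlevelsets}, hence is valid for $l_1$), to ruling out solutions of the form $\chi_E$ with $E \subset \Omega$ of finite perimeter. Writing $\theta \in [0,\frac{\pi}{2}]$ for the angular coordinate on the arc carrying $F_\infty$, the candidate value is
\[ M := \int_{F_\infty}(\sin\theta + \cos\theta)\, d\mathcal{H}^1, \]
which is positive because $\sin\theta + \cos\theta \geq 1$ on $[0,\frac{\pi}{2}]$ and $\mathcal{H}^1(F_\infty) > 0$. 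The relevance of $M$ is the following: since any segment between two points of the first-quadrant arc is monotone in each coordinate, its $l_1$-length equals that of the subtended arc, so summing $\|\ell_{(m_1,\dots,m_n)}\|_{l_1} = \int_{\Gamma_{(m_1,\dots,m_n)}}(\sin\theta+\cos\theta)\,d\mathcal{H}^1$ over $|m|=n$ gives $P_{l_1}(E_n,\Omega) = \int_{F_n}(\sin\theta+\cos\theta)\,d\mathcal{H}^1 \to M$.

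For the lower bound I would use the identity $\int_\Omega|D\chi_E|_{l_1} = |D_1\chi_E|(\Omega) + |D_2\chi_E|(\Omega)$ (the polar of $l_1$ being $l_\infty$, the defining supremum separates into the two coordinate directions) and slice by coordinate lines. For $\mathcal{L}^1$-a.e.\ $t$ the horizontal section $E_t$ of $E$ is a one-dimensional set of finite perimeter in $I_t := \Omega \cap \{x_2 = t\}$, with $|D_1\chi_E|(\Omega) = \int_{\mathbb{R}}\#(\partial^* E_t)\,dt$, and with one-dimensional traces at the two endpoints of $I_t$ equal to $T\chi_E = \chi_{F_\infty}$ at the corresponding boundary points (compatibility of the trace with one-dimensional restrictions, see \cite{AFP}). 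When $t = \sin\theta$ with $\theta$ an angle of $F_\infty$, the right endpoint of $I_t$ lies in $F_\infty$ (trace $1$) while the left endpoint has $x_1 < 0$, hence lies outside $F_\infty$ (trace $0$), so $E_t$ is a nonempty proper subset of $I_t$ and $\#(\partial^* E_t) \geq 1$; this yields $|D_1\chi_E|(\Omega) \geq \mathcal{L}^1(\sin(F_\infty)) = \int_{F_\infty}\cos\theta\,d\mathcal{H}^1$, and symmetrically $|D_2\chi_E|(\Omega) \geq \mathcal{L}^1(\cos(F_\infty)) = \int_{F_\infty}\sin\theta\,d\mathcal{H}^1$, so $P_{l_1}(E,\Omega) \geq M$ for every admissible $E$. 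For the matching upper bound I would take $G_n := E_n \setminus \bigcup_{k \geq n}\bigcup_{|m|=k}\Delta'_{(m_1,\dots,m_k)}$: the removed caps lie strictly inside $E_n$ and are pairwise disjoint, and removing them exactly exposes the arcs $\Gamma'_{(m_1,\dots,m_k)}$ with $k \geq n$, whose union is $F_n \setminus F_\infty$; by the cone argument from the proof of Proposition \ref{prop:existencelimit} one gets $T\chi_{G_n} = \chi_{F_\infty}$, while $\partial G_n \cap \Omega$ is a disjoint union of segments giving $P_{l_1}(G_n,\Omega) = P_{l_1}(E_n,\Omega) + \sum_{k\geq n}\sum_{|m|=k}\|\ell'_{(m_1,\dots,m_k)}\|_{l_1} \leq P_{l_1}(E_n,\Omega) + \sqrt{2}\,(\mathcal{H}^1(F_n) - \mathcal{H}^1(F_\infty)) \to M$. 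Hence the infimum equals $M$.

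It then remains to show $M$ is not attained. If some admissible $\chi_E$ had $P_{l_1}(E,\Omega) = M$, equality would hold in every estimate above. From $|D_2\chi_E|(\Omega) = \mathcal{L}^1(\cos(F_\infty))$ one deduces $\#(\partial^* E^x) = 0$, and then (by the trace at the top endpoint of the vertical section) $E^x = \emptyset$, for a.e.\ $x \notin \cos(F_\infty)$, so $E \subset \cos(F_\infty) \times \mathbb{R}$ up to an $\mathcal{L}^2$-null set. From $|D_1\chi_E|(\Omega) = \mathcal{L}^1(\sin(F_\infty))$ one deduces that for a.e.\ angle $\theta$ of $F_\infty$ the section $E_{\sin\theta}$ has a single boundary point and reaches the right endpoint $\cos\theta$ of $I_{\sin\theta}$, i.e.\ $E_{\sin\theta} = (s(\theta),\cos\theta)$ with $s(\theta) < \cos\theta$. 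For a.e.\ such $\theta$ both facts hold, so the nonempty open interval $(s(\theta),\cos\theta)$ is contained in $\cos(F_\infty)$ up to a null set; since $\cos(F_\infty)$ is closed, the open set $(s(\theta),\cos\theta)\setminus\cos(F_\infty)$ is null, hence empty, so $(s(\theta),\cos\theta) \subset \cos(F_\infty)$ --- impossible, since $\cos(F_\infty)$ is homeomorphic to $F_\infty$, hence to the Cantor set, and has empty interior. This contradiction shows no solution exists.

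The hard part is precisely this last rigidity step: because $l_1$ is not strictly convex, minimal surfaces are far from unique (every monotone staircase is $l_1$-minimal), so one cannot identify a solution's superlevel sets with $E_n$ and transcribe Propositions \ref{prop:existenceofEn}--\ref{prop:nonexistence}; one must instead exploit that an optimal set is forced to be simultaneously ``simple in the horizontal direction over $\sin(F_\infty)$'' and ``simple in the vertical direction over $\cos(F_\infty)$'', which cannot happen when $F_\infty$ is totally disconnected. A minor technical point to treat carefully is the compatibility between the trace of $\chi_E$ on $\partial\Omega$ and the traces of its one-dimensional sections.
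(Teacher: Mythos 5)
Your proof is correct, but it takes a genuinely different route from the paper's. The paper stays inside the scheme of Propositions \ref{prop:existenceofEn} and \ref{prop:nonexistence}: it replaces each chord $\ell_{(m_1,\dots,m_n)}$ by the vertical-plus-horizontal path joining the same endpoints (still $l_1$-minimal in the first quadrant), checks that the resulting $\chi_{E_n^1}$ is the \emph{maximal} solution for the datum $\chi_{F_n}$, and then uses the submodularity inequality $P_\phi(E\cap E_n^1,\Omega)+P_\phi(E\cup E_n^1,\Omega)\leq P_\phi(E,\Omega)+P_\phi(E_n^1,\Omega)$ to trap any putative solution $E$ for $\chi_{F_\infty}$ inside $\bigcap_n E_n^1=\emptyset$. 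You instead compute the value of the infimum exactly, $M=\int_{F_\infty}(\sin\theta+\cos\theta)\,d\mathcal{H}^1$, via the splitting $\int_\Omega|D\chi_E|_{l_1}=|D_1\chi_E|(\Omega)+|D_2\chi_E|(\Omega)$ and one-dimensional slicing, and then show non-attainment by a rigidity argument: equality would force a horizontal section over $\sin(F_\infty)$ to be a single nonempty interval contained in $\cos(F_\infty)\times\mathbb{R}$, contradicting the empty interior of the Cantor set. Both arguments are sound. Yours buys two things: it sidesteps the uniqueness/maximality analysis of the approximate problems, which for $l_1$ is the delicate point (minimal curves are massively non-unique, since every coordinate-monotone staircase is $l_1$-minimal), and it identifies the optimal value $M$, which the paper's argument does not. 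The cost is that the directional splitting of the perimeter is special to the crystalline, coordinate-aligned norm, whereas the paper's scheme reuses machinery valid for any anisotropy. The one step you should write out in full is the one you flag yourself: the compatibility for a.e.\ slicing line between the trace of $\chi_E$ on $\partial\Omega$ and the endpoint limits of the one-dimensional sections. This is standard on Lipschitz-graph portions of the boundary and fails only at the two tangent lines, a null set of parameters, so it causes no loss.
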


\begin{proof}
We perform a similar argument as in the proof of Proposition \ref{prop:nonexistence}. As in the proof of the previous Proposition, since $F_\infty$ lies in the first quadrant of the coordinate plane, for all $n \in \mathbb{N}$ we may define $h_{l_1}(m_1,...,m_{n-1})$ by formula \eqref{eq:definitionofh} even though the norm is not strictly convex and $h_{l_1}(m_1,...,m_{n-1}) > 0$ for all $(m_1,...,m_{n-1}) \in \{ 0,1 \}^{n-1}$.

Now, we slightly alter Proposition \ref{prop:existenceofEn} in the following way: in the construction of $E_n$, replace each of the sets $\Delta_{(m_1,...,m_n)}$ (whose boundary consists of $\Gamma_{(m_1,...,m_n)}$ and $\ell_{(m_1,...,m_n)}$) with the set $\Delta^1_{(m_1,...,m_n)}$, whose boundary consists of $\Gamma_{(m_1,...,m_n)}$ and two line segments, one vertical and one horizontal. We denote the resulting set by $E_n^{1}$. Then, a careful examination of the proof of Proposition \ref{prop:existenceofEn} yields that while solutions to \eqref{eq:problem} for boundary data $f_n = \chi_{F_n}$ are no longer unique, $\chi_{E_n^1}$ is a solution and any other solution $u_n$ satisfies $u_n \leq \chi_{E_n^1}$. But then, since $\bigcap_{n=0}^\infty E_n^1 = \emptyset$, we make the same argument as in the proof of Proposition \ref{prop:nonexistence} to conclude that there is no solution to \eqref{eq:problem} for boundary data $f =\chi_{F_\infty}$.
\end{proof}

{\flushleft \bf Funding.} This work was partially supported by the DFG-FWF project FR 4083/3-1/I4354, by the OeAD-WTZ project CZ 01/2021, and by the project 2017/27/N/ST1/02418 funded by the National Science Centre, Poland.

{\flushleft \bf Conflict of interest.} On behalf of all authors, the corresponding author states that there is no conflict of interest.

{\flushleft \bf Data availability statement.} Data sharing not applicable to this article as no datasets were generated or analysed during the current study.

\end{document}